\newtheorem{theorem}{Theorem}[section]
\newtheorem{lemma}[theorem]{Lemma}
\newtheorem{proposition}[theorem]{Proposition}
\newtheorem{corollary}[theorem]{Corollary}
\theoremstyle{definition}
\newtheorem{definition}[theorem]{Definition}
\theoremstyle{remark}
\newtheorem{remark}[theorem]{Remark}
\newcommand{\R}{\mathbb R}
\newcommand{\ep}{\varepsilon}
\newcommand{\eps}{\varepsilon}
\newcommand\chb{{\mathcal Q}_s}
\newcommand{\Si}{\Sigma}
\newcommand{\Om}{\Omega}
\newcommand{\Oms}{\Omega\setminus\Sigma}
\newcommand{\Alo}{\mathcal{A}_L(\Omega)}
\newcommand{\Ain}{\Omega_i^{(n)}}
\newcommand{\haus}{\mathcal{H}^1}
\newcommand\meas{\mathop{\rm meas}}
\newcommand{\weak}{\rightharpoonup^*}
\newcommand{\prob}{\mathcal{P}(\overline{\Om})}
\newcommand{\la}{\lambda}
\newcommand{\lapz}{\Lambda_p}
\newcommand{\lan}{\lambda_p^{\sigma,\rho}}
\newcommand{\lap}{\lambda_{p}}
\newcommand{\Flp}{F_L}
\newcommand{\Fp}{F}
\newcommand{\Fli}{F_L^{\infty}}
\newcommand{\Fi}{F^{\infty}_\infty}
\newcommand{\Flpnew}{F_L^{(p)}}
\newcommand{\Fpnew}{F^{(p)}_\infty}
\newcommand{\Flinew}{F_L^{\infty}}
\DeclareMathOperator*{\esssup}{ess\,sup}
\DeclareMathOperator*{\essinf}{ess\,inf}
\newdimen\mex
\def\niv{\mathrel{\hbox{\hglue -0.4\mex
\vrule \@height 1.4\mex \@width .14\mex
\vrule \@height .14\mex \@width .75\mex
\hglue -0.2\mex}}}
\title[Asymptotics of the first Laplace eigenvalue]{Asymptotics of the first Laplace eigenvalue with Dirichlet regions of
prescribed length}
\author{Paolo Tilli and Davide Zucco}
\address{Paolo Tilli, Dipartimento di Scienze Matematiche, Politecnico di Torino, Italy}
\email{paolo.tilli@polito.it}
\address{Davide Zucco, Scuola Internazionale Superiore di Studi Avanzati, Trieste, Italy}
\email{davide.zucco@sissa.it}
\begin{document}

\begin{abstract}
We consider the problem of maximizing the first eigenvalue of the
$p$-laplacian (possibly with non-constant coefficients) over a
fixed domain $\Omega$, with Dirichlet conditions along
$\partial\Omega$ and along a supplementary set $\Sigma$, which is
the unknown of the optimization problem. The set $\Sigma$, that
plays the role of a supplementary stiffening rib for a membrane
$\Omega$, is a compact connected set (e.g. a curve or a connected
system of curves) that can be placed anywhere in
$\overline{\Omega}$, and is subject to the constraint of an upper
bound $L$ to its total length (one-dimensional Hausdorff measure).
This upper bound prevents $\Sigma$ from spreading throughout
$\Omega$ and makes the problem well-posed. We investigate the
behavior of optimal sets $\Sigma_L$ as $L\to\infty$ via
$\Gamma$-convergence, and we explicitly construct certain
asymptotically optimal configurations. We also study the behavior
as $p\to\infty$ with $L$ fixed, finding connections with
maximum-distance problems related to the principal frequency of
the $\infty$-laplacian.
\end{abstract}

\maketitle

\paragraph{\textbf{Keywords:} First Laplace eigenvalue, optimization, $\Gamma$-convergence}

\paragraph{\textbf{AMS:} 35P15, 35P20, 49Q10}

\pagestyle{myheadings}
\thispagestyle{plain}

\section{Introduction}

Variational problems for the eigenvalues of the Laplace operator
have a long history, which dates back to 1877 when Lord Rayleigh
observed and conjectured that of all membranes with a given area,
the circle has the minimum principal frequency. Lord Rayleigh's
conjecture was proved many years later by G. Faber in 1923 and,
independently, by E. Krahn in 1924, and many other contributions
to similar problems were given ever since (see \cite{hen} for
other references on the subject).

Here we consider a new optimization problem for the first eigenvalue
of an isotropic elliptic operator with nonconstant coefficients in two dimensions,
where the unknown is the location of the Dirichlet condition.
The general setting consists of:
\begin{itemize}
    \item a bounded connected open set $\Omega\subset\R^2$, with Lipschitz boundary $\partial\Omega$
    (we do not assume that $\Omega$ is simply connected);
    \item two  functions $\rho,\sigma$, continuous over $\overline{\Omega}$ and strictly positive;
    \item a real number $p\geq 1$.
\end{itemize}
With these ingredients, for any open set $A\subset\Omega$ one can define
the first Dirichlet eigenvalue $\lan(A)$ through the variational formula
\begin{equation}\label{eq:eigen2}
\lan(A):= \inf_{u\in W^{1,p}_0(A) \atop u\not\equiv 0} \frac{\int_{A} \sigma(x)|\nabla u(x)|^pdx}{\int_{A}\rho(x)|u(x)|^p dx}
\end{equation}
(when $p>1$ the infimum is attained by the so called first eigenfunction,
while for $p=1$ it is not attained and is tightly related to the \emph{Cheeger constant}
see \cite{bucaco}). Note that, if $\rho=\sigma\equiv 1$, we obtain the first eigenvalue of the
$p$-laplacian, denoted for simplicity by $\lap$:
\begin{equation}\label{deflap}
\lap(A):= \inf_{u\in W^{1,p}_0(A) \atop u\not\equiv 0} \frac{\int_{A} |\nabla u(x)|^pdx}{\int_{A}|u(x)|^p dx}.
\end{equation}
For every $L>0$, as in \cite{butsan} we define the class of admissible sets
\begin{equation}
\label{alo}
\Alo:=\left\{\Sigma\subset\overline{\Omega}\,\,:\,\,\text{$\Sigma$ is a continuum, and $\haus(\Sigma)\leq L$}\right\}
\end{equation}
where ``continuum'' stands for ``connected, compact and non-empty set'' and
$\haus$ denotes the one-dimensional Hausdorff measure.

The variational problem we consider is
\begin{equation}\label{prob2}
 \max \left\{\lan(\Om\setminus\Si): \; \Sigma\in \Alo\right\},
\end{equation}
that is, to find the best location $\Sigma\in\Alo$ for an \emph{extra Dirichlet condition}
(in addition to that along $\partial\Omega$), in order to maximize the first eigenvalue $\lan$.
In particular, we shall study the behavior of the optimal sets, as $L\to\infty$, via $\Gamma$-convergence.

A possible physical interpretation of \eqref{prob2}, at least when $p=2$,
is the following. The domain
$\Omega$ represents an elastic structure (e.g. a membrane) in the plane
(with density $\rho$ and Young modulus $\sigma$) which is fixed
along its boundary $\partial\Omega$ and, as such, has a
fundamental frequency given by $\sqrt{\lan(\Omega)}$.
One is interested in augmenting and possibly  maximizing this
fundamental frequency, by fixing the membrane along a
supplementary curve (or system of curves) $\Sigma$ of given
length, which may be placed anywhere in $\overline{\Omega}$.
One may think of $\Sigma$ as a sort of stiffening rib,
to obtain a reinforced structure $\Omega\setminus\Sigma$.
Note
that not only the location, but also the shape of $\Sigma$ is
free (cf.~\cite{hakrku}, where the shape is a ball of given radius and
only its placement is to be optimized).
The role of the parameter $L$ is an upper bound to the total
resources available: if fixing the structure
along a system of curves $\Si$ has a cost proportional to its
total length $\haus(\Si)$, then $L$ is the maximum cost one is willing to
spend to reinforce the membrane.

Another problem, in the same spirit but with the compliance functional instead
of the first eigenvalue, has been treated in \cite{butsan}. Among the main differences
is the fact that \eqref{prob2} is not driven by
an \emph{a priori} given PDE, which reflects into the non-locality of the resulting
$\Gamma$-limit (see below).
The class of admissible sets $\Alo$ is also typical of the well-studied
\emph{average distance problems}, where the distance function to
$\Sigma$ is to be minimized. These
were first introduced by Buttazzo, Oudet and Stepanov in \cite{buoust, butste}, while
the asymptotics of minimizers was studied in \cite{mostil} (see also \cite{leme}).

\smallskip

As in \cite{buoust,butsan}, the geometric restrictions on $\Sigma$ entailed by \eqref{alo}
are useful in view of an existence result for
\eqref{prob2} (connectedness of $\Sigma$ can be relaxed to a bound on the number of
connected components, but some control is needed to prevent
$\Sigma$ from spreading throughout $\Omega$ and prejudice existence).
By classical results of Blaschke and Go{\l}ab (see~\cite{ambtil})  the space
$\Alo$ is compact in the Hausdorff metric, and the map
$\Sigma\mapsto \lan(\Omega\setminus\Sigma)$
is continuous (see \cite{sve} for $p=2$ and \cite{buctre} for the general
case). This leads to
\begin{theorem}[Existence]\label{teo:ex2}
For every $L>0$, there exists a maximizer in \eqref{prob2}. Moreover,
every maximizer $\Sigma$ satisfies $\haus(\Sigma)=L$.
\end{theorem}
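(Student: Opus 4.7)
\emph{Existence.} The plan is to apply the direct method. The class $\Alo$ is Hausdorff-compact by the Blaschke selection principle combined with Go\l ab's lower semicontinuity of $\haus$ on continua, and the functional $\Sigma\mapsto\lan(\Om\setminus\Sigma)$ is continuous with respect to Hausdorff convergence by the results of \cite{sve} (for $p=2$) and \cite{buctre} (general $p$). Hence any maximizing sequence $\{\Sigma_n\}\subset\Alo$ admits a subsequence converging in the Hausdorff metric to some $\Sigma_\infty\in\Alo$, and continuity forces $\lan(\Om\setminus\Sigma_\infty)$ to equal the supremum in \eqref{prob2}. Given that both ingredients are already available in the literature, this half of the statement is essentially bookkeeping.

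\emph{Saturation of the length constraint.} I argue by contradiction. Suppose $\Sigma\in\Alo$ is a maximizer with $\haus(\Sigma)<L$, and set $\eps:=L-\haus(\Sigma)>0$. Since $\haus(\Sigma)<\infty$, the open set $\Oms$ has positive planar measure and so is nonempty; pick $y_0\in\Oms$. Using the connectedness of $\overline{\Om}$, I construct a simple rectifiable arc $\gamma\subset\overline{\Om}$ of length at most $\eps$ meeting $\Sigma$ in exactly one endpoint---for instance, by joining $y_0$ to $\Sigma$ by a path in $\overline{\Om}$ and truncating it at its first hitting time of $\Sigma$. Then $\Sigma':=\Sigma\cup\gamma$ still belongs to $\Alo$, strictly contains $\Sigma$, and by Dirichlet monotonicity one has $\lan(\Om\setminus\Sigma')\geq\lan(\Oms)$; to contradict maximality it suffices to upgrade this to a strict inequality.

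\emph{Main obstacle.} Precisely this strict inequality is the delicate point. The idea is that a non-degenerate rectifiable arc in $\R^2$ has strictly positive $p$-capacity for every $p>1$. If equality held, the first eigenfunction $u$ of $\Oms$ (which is strictly positive on $\Oms$ by the strong maximum principle) would also minimize the Rayleigh quotient on $W^{1,p}_0(\Om\setminus\Sigma')$; this membership would force $u$ to vanish quasi-everywhere on $\gamma\setminus\Sigma$, contradicting its positivity on a set of positive $p$-capacity. The borderline case $p=1$ needs additional care, since the infimum is not attained and a Cheeger-set characterization must replace the eigenfunction argument; the positivity of $\sigma$ and $\rho$ enters only to guarantee that the weighted Rayleigh quotient is comparable to the unweighted one on every open subset of $\Om$, so that the standard capacity/positivity machinery applies unchanged.
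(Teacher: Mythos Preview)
Your existence argument coincides with the paper's (Blaschke--Go\l ab compactness of $\Alo$ together with Hausdorff-continuity of $\Sigma\mapsto\lan(\Oms)$ from \cite{sve,buctre}). For the saturation claim the paper likewise proceeds by attaching short segments to $\Sigma$, so the overall strategy is the same; your capacity/eigenfunction justification for the \emph{strict} increase of the eigenvalue is in fact more explicit than the paper's one-line remark.

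There is, however, a small gap. Your assertion that the first eigenfunction $u$ of $\Oms$ is strictly positive on all of $\Oms$ requires $\Oms$ to be connected, which need not hold (take $\Sigma$ to be a diameter of a disk). In general $\lan(\Oms)=\min_i\lan(U_i)$ over the connected components $U_i$, and any first eigenfunction is supported only on components realising this minimum. If your single arc $\gamma$ happens to lie in a component $U_j$ with $\lan(U_j)>\lan(\Oms)$, then $\lan(\Om\setminus\Sigma')=\lan(\Oms)$ and no contradiction arises; and even if $\gamma$ lies in one minimising component, another minimising component could still carry a first eigenfunction of $\Om\setminus\Sigma'$. The fix---which is exactly what the paper's phrase ``reducing all the connected components of $\Omega\setminus\Sigma$'' is pointing to---is to note that only finitely many $U_i$ achieve the minimum (the remaining ones have small measure, hence large eigenvalue by a Faber--Krahn type bound together with \eqref{analog}), and to attach one short segment inside each of these, with total added length still below $L-\haus(\Sigma)$. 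Your capacity argument then applies componentwise and delivers the strict inequality.
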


The second claim follows from the elementary consideration that,
if  $\haus(\Sigma)<L$, then one could increase
$\lan(\Omega\setminus\Sigma)$ by attaching to $\Sigma$ some short
segments, thus reducing all the connected components of
$\Omega\setminus\Sigma$ (cf.~\cite{buoust,butsan}). Also note that
the \emph{minimization} problem analogous to  \eqref{prob2}
would be trivial, since $\lan(\Oms)\geq\lan(\Om)$ for every
$\Sigma\in\Alo$ and equality is achieved by any admissible $\Si$
hidden inside $\partial \Om$.

Contrary to average distance problems where regularity results are available
for the optimal sets $\Sigma$
(see~\cite{butste,santil}),
similar questions for \eqref{prob2} are open (except for Ahlfors regularity which we will
not discuss here).

In this paper we mainly focus on the asymptotic behavior of optimal sets
$\Si_L$ of problem $\eqref{prob2}$ as $L\to \infty$, with the goal
of studying the limit distribution of $\Si_L$ within
$\Om$. Of course, as $L$ increases, the optimal
configurations $\Sigma_L$ tend to saturate $\Omega$ (i.e. $\Sigma_L\to\overline{\Omega}$
in the Hausdorff distance),
and any information concerning the density (length
of $\Sigma_L$  per unit area in $\Omega$) is lost in the limit. To retrieve
this information we prove a $\Gamma$-convergence result,
in the space $\prob$ of
probability measures in $\overline{\Omega}$, identifying each admissible
$\Si\in\Alo$ with the probability measure
\begin{equation}\label{probability}
 \mu_\Si=\frac{\haus\niv\Si}{\haus(\Sigma)},
\end{equation}
where $\haus\niv\Si$ denotes the restriction of the one-dimensional Hausdorff measure to $\Sigma$
(this expedient is natural in this kind of problems, see \cite{mostil,butsan}). Thus,
our problem \eqref{prob2} becomes equivalent to the \emph{minimization}
of the functional
$\Flp:\prob\to [0,\infty]$ defined as
\begin{equation}\label{fl}
\Flp(\mu)=
\begin{cases}
\dfrac{L^p}{\lan(\Oms)} &\text{if  $\mu=\mu_\Si$ for some $\Si\in\Alo$,}\\[2mm]
\infty  &\text{otherwise.}
\end{cases}
\end{equation}
The scaling factor $L^p$, as we will see, is natural  as the maximum achieved in \eqref{prob2} grows
as $L^p$ for large $L$ (and, of course, rescaling does not
alter the original problem anyhow).
The  $\Gamma$-convergence result we will prove is then the following.
\begin{theorem}\label{teogam2}
As $L\to\infty$, the functionals $\Flp$ defined in \eqref{fl}
$\Gamma$-converge, with respect to the weak*
topology on $\prob$, to the functional $\Fp:\prob\mapsto [0,\infty]$ defined as
\begin{equation}\label{gammalimittwo}
\Fp(\mu):=\frac{1}{{\lapz}}\esssup_{x\in\Omega}
\frac{\rho(x)}{\sigma(x)f(x)^p}
\end{equation}
where
$f\in L^1(\Om)$ is the
density (Radon-Nikodym derivative) of $\mu$ with respect to the
Lebesgue measure, while $\lapz$ is the numerical constant
\[
{\lapz}:=(p-1)
\left(\frac{2\pi}{p\sin\left(\pi/p\right)}\right)^p
\quad\text{if $p>1$,}
\qquad
\Lambda_1:=2.
\]
\end{theorem}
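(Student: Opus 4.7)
The statement is a $\Gamma$-convergence result, so I would establish separately the $\Gamma$-$\liminf$ inequality and the existence of a recovery sequence in the weak$^*$ topology on $\prob$.

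For the $\Gamma$-$\liminf$, fix $\mu_L\weak\mu$ and assume $\Flp(\mu_L)<\infty$ eventually, so $\mu_L=\mu_{\Si_L}$ for some $\Si_L\in\Alo$ with $\haus(\Si_L)=L$ (by Theorem~\ref{teo:ex2}). Let $f$ be the density of the absolutely continuous part of $\mu$. If $f$ vanishes on a set of positive Lebesgue measure then $\Fp(\mu)=+\infty$ and the inequality holds because $\lan(\Om\setminus\Si_L)$ stays bounded (a $\Si_L$-free region of fixed size survives). Otherwise, since $\Fp(\mu)=(1/\lapz)\esssup_{x}\rho(x)/(\sigma(x)f(x)^p)$ is realised as an essential sup, it is enough to show that at a.e.\ Lebesgue point $x_0$ of $f$,
\[
\limsup_{L\to\infty}\frac{\lan(\Om\setminus\Si_L)}{L^p}\leq \lapz\,\frac{\sigma(x_0)}{\rho(x_0)}f(x_0)^p,
\]
because taking the essinf of the right-hand side yields the liminf inequality. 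The plan is then to build a localised test function near $x_0$ and invoke $\lan(\Om\setminus\Si_L)\leq R(u)$ for any admissible $u$.

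Fix a small square $Q_a\subset\Om$ of side $a$ centered at $x_0$. Weak$^*$ convergence plus Lebesgue differentiation give $\haus(\Si_L\cap Q_a)=La^2(f(x_0)+o(1))$ as $L\to\infty$ and then $a\to 0$. Slice $Q_a$ into $n_L:=\lceil La\,f(x_0)(1+\delta)\rceil$ parallel strips of width $w_L=a/n_L\sim 1/(Lf(x_0))$; pigeonhole selects a strip $S_L$ with $\haus(\Si_L\cap S_L)\leq a/(1+\delta)+o(1)<a$. Since any arc joining the two long sides of $S_L$ has length at least $a$, the set $\Si_L\cap\overline{S_L}$ contains no such arc, and a planar separation argument together with the continuum property of $\Si_L$ extracts a sub-rectangle $R_L\subset S_L$ of short side $w_L$ and long side $\Theta(a)$ with $R_L\cap\Si_L=\emptyset$. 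On $R_L$ I place the test function $u_L(x,y)=\phi(x-x_L)\chi(y)$, where $\phi$ is the normalised principal $p$-eigenfunction on $(0,w_L)$ (so that $\int|\phi'|^p/\int|\phi|^p=\lapz/w_L^p$) and $\chi$ is a smooth cutoff in the long variable, tuned so that the edge terms $|\chi'|^p\phi^p$ are negligible compared with $|\phi'|^p\chi^p$. By continuity of $\sigma,\rho$ at $x_0$ and direct computation of the 2D Rayleigh quotient, letting $L\to\infty$ and then $\delta,a\to 0$ yields the exact constant $\lapz\sigma(x_0)f(x_0)^p/\rho(x_0)$.

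For the $\Gamma$-$\limsup$, given $\mu$ with density $f$ I construct $\Si_L\in\Alo$ explicitly. Partition $\Om$ into a regular grid $\{Q_k\}$ of squares of side $h_L$ with $1/L\ll h_L\ll 1$; in each $Q_k$ with centre $x_k$, place $m_k=\lfloor h_L\,L\,f(x_k)\rfloor$ equispaced parallel segments of length $\sim h_L$ with spacing $s_k\sim 1/(Lf(x_k))$. The total grid length sums to $\sum_k h_L m_k\sim L\int_\Om f=L$, and a thin connecting spine (e.g.\ the union of cell boundaries) of total length $O(h_L^{-1})=o(L)$ glues everything into a single continuum of $\Alo$. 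The resulting measures $\mu_{\Si_L}$ have density close to $f(x_k)$ on $Q_k$, hence $\mu_{\Si_L}\weak\mu$. Meanwhile, up to lower-order adjustments every connected component of $\Om\setminus\Si_L$ inside $Q_k$ is a rectangle of short side $s_k$ and long side $\sim h_L\gg s_k$, whose first $(\sigma,\rho)$-weighted $p$-eigenvalue converges to $\lapz\sigma(x_k)(Lf(x_k))^p/\rho(x_k)$; taking the minimum over $k$ (which in the limit becomes an esssup of $\rho/(\sigma f^p)$) gives $L^p/\lan(\Om\setminus\Si_L)\to \Fp(\mu)$, as needed.

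The hardest step is the slicing argument in the $\Gamma$-$\liminf$: extracting a truly $\Si_L$-free sub-rectangle whose short side matches the sharp scale $1/(Lf(x_0))$. A naive cutoff of $\phi$ in a tubular neighbourhood of $\Si_L\cap S_L$ recovers the correct rate $L^p$, but only with a multiplicative constant strictly worse than $\lapz$, since the gradient of the cutoff becomes comparable to $\phi'$ at exactly this scale. Obtaining the sharp constant $\lapz$ rests on the topological extraction step, which exploits the continuum property of $\Si_L$ together with the pigeonhole length bound $\haus(\Si_L\cap S_L)<a$.
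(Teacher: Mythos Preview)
Your $\Gamma$-limsup construction is close in spirit to the paper's (comb-like tiles placed cell by cell, then glued), so that part is fine.  The gap is in the $\Gamma$-liminf, precisely at the step you flag as the crux: the ``planar separation'' extraction of a $\Si_L$-free sub-rectangle $R_L\subset S_L$ of width $w_L$ and length $\Theta(a)$.  This extraction fails in general.  Take $\Si_L\cap Q_a$ to be a regular grid of $N\sim Laf(x_0)/2$ horizontal and $N$ vertical lines; this has the prescribed local length, and your pigeonhole indeed produces a strip $S_L$ (one that misses all vertical grid lines) with $\haus(\Si_L\cap S_L)=(N{+}1)w_L<a$.  But inside that strip you still have $N$ horizontal segments spanning its full width, so every full-width empty sub-rectangle has height at most $a/N\sim 2(1{+}\delta)w_L$, not $\Theta(a)$.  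Testing with such a squat rectangle gives $\lap(R_L)\ge \lapz w_L^{-p}(1+c)$ with $c>0$ bounded away from zero, and you lose the sharp constant.  The continuum property of $\Si_L$ does not help here: each horizontal tab is connected to the rest of $\Si_L$ through the long sides of $S_L$, and nothing forces large vertical gaps.

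The paper avoids this obstacle by not looking for empty rectangles at all.  It first proves the liminf inequality with an abstract constant $\theta_p$ (the best asymptotic ratio $L^p/\lap$ over the unit square), using only monotonicity and localization---no geometry of $\Si_L$.  The sharp value $\theta_p=1/\lapz$ is then obtained from a \emph{global} test function $u(x)=g\bigl(\mathop{d}(x,\Si\cup\partial\Omega)\bigr)$, with $g$ the one-dimensional eigenfunction: the coarea formula together with the area bound $\meas\{d<t\}\le 2\haus(\Si\cup\partial\Omega)t+O(t^2)$ (Lemma~\ref{lemmaarea}) turns the Rayleigh quotient into the one-dimensional ratio $\lapz$, uniformly over \emph{all} admissible $\Si$, including grids.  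That distance-function test is the missing idea; without it, or something equivalent, your slicing argument cannot reach the constant $\lapz$.
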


\begin{remark}
The constant $\lapz$ (see~\cite{busedm,lindqvist})
is just
the first Dirichlet eigenvalue for the  $p$-laplacian in \emph{one
variable}, namely
\begin{equation}\label{defLambda}
\lapz = \inf_{u\in W^{1,p}_0(0,1) \atop u\not\equiv 0} \frac
{\int_0^1 |u'(z)|^p\,dz}{\int_0^1 |u(z)|^p\,dz}.
\end{equation}
If $p>1$, the infimum is attained
by the first eigenfunction $u_1$  that solves
the equation
\begin{equation}\label{eqei}
-(|u_1'|^{p-2}u_1')'={\lapz} |u_1|^{p-2}u_1,\quad
u_1(0)=u_1(1)=0.
\end{equation}
When $p=1$, the
infimum is not attained and $\Lambda_1=2$, as one can see letting $u$
approximate the characteristic function $\chi_{(0,1)}$.
See \cite{busedm,lindqvist} for more details.
\end{remark}

The $\Gamma$-limit functional $\Fp$ defined in \eqref{gammalimittwo}  has a unique minimizer. Indeed
\[
\min_{\mu\in\prob} \Fp(\mu)=\frac{1}{{\lapz}}
\min_{\begin{subarray}{c}f\geq 0 \\ \int_\Om f \leq
1\end{subarray}}\esssup_{x\in\Om}
\frac{\rho(x)}{\sigma(x)f(x)^p}=\frac{1}{{\lapz}}
\Big(\int_{\Om}\left(\frac{\rho(x)}{\sigma(x)}\right)^{1/p}dx\Big)^p
\]
achieved only when $\mu=\mu_\infty$, the absolutely continuous measure with density
\begin{equation}\label{eq:densf2}
f(x)=\frac{\left(\rho(x)/\sigma(x)\right)^{1/p}}{\int_{\Om}\left(\rho(y)/\sigma(y)\right)^{1/p}\,dy}
\end{equation}
(note that $\mu_\infty$ reduces to normalized Lebesgue measure, if $\rho$ and $\sigma$ are constant).
As the space $\prob$ is compact in the weak* topology, from standard $\Gamma$-convergence theory
(see~\cite{dalmaso}) we can recover the limiting distribution of the optimal sets $\Sigma_L$ for large $L$:
\begin{corollary}\label{cor:main2}
If $\Si_L$ is a maximizer of problem \eqref{prob2}, then as $L\to\infty$ the probability measures $\mu_{\Si_L}$ converge,
in the weak* topology of $\prob$, to the probability measure $\mu_\infty$, absolutely continuous with respect to the Lebesgue measure,
having the density given in \eqref{eq:densf2}. In particular, for every square $Q\subset\Omega$,
\begin{equation}\label{asopt}
\lim_{L\to\infty}\frac{\haus(\Sigma_L\cap Q)}{\haus(\Si_L)}=\int_Q f(x)\,dx
\end{equation}
and, moreover,
\begin{equation}
\label{asoptbis}
 \lim_{L\to \infty} \frac{L^p}{\lan(\Oms_L)}=\Fp(\mu_\infty)=\frac {\left(\int_{\Om}(\rho(x)/\sigma(x))^{1/p}dx\right)^p}{{\lapz}}.
\end{equation}
\end{corollary}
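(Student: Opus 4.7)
The plan is to derive the corollary from Theorem \ref{teogam2} via the standard machinery of $\Gamma$-convergence, exploiting the weak* compactness of $\prob$.

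First I would observe that, by the definition \eqref{fl} of $\Flp$ and by Theorem \ref{teo:ex2}, every maximizer $\Sigma_L$ of \eqref{prob2} satisfies $\haus(\Sigma_L)=L$, so the probability measure $\mu_{\Sigma_L}$ in \eqref{probability} is well defined and minimizes $\Flp$ on $\prob$, the corresponding minimum value being exactly $L^p/\lan(\Omega\setminus\Sigma_L)$. In other words, the original maximization problem \eqref{prob2} is literally the same as the minimization of $\Flp$ on $\prob$.

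Next, the fundamental theorem of $\Gamma$-convergence (see \cite{dalmaso}), combined with Theorem \ref{teogam2} and the weak* compactness of $\prob$, guarantees that any weak* cluster point of the family $(\mu_{\Sigma_L})_{L>0}$ is a minimizer of $\Fp$ and that $\min_{\prob}\Flp\to\min_{\prob}\Fp$ as $L\to\infty$. The elementary computation displayed immediately before the corollary shows that $\Fp$ has a unique minimizer, namely the measure $\mu_\infty$ with density $f$ given in \eqref{eq:densf2}. Uniqueness of the cluster point, combined with compactness, forces the entire family $\mu_{\Sigma_L}$ to weak* converge to $\mu_\infty$: otherwise a subsequence lying outside some fixed weak* neighborhood of $\mu_\infty$ would admit a further weak*-convergent subsequence whose limit, being a minimizer of $\Fp$, would have to coincide with $\mu_\infty$, a contradiction.

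The assertion \eqref{asoptbis} now follows immediately by taking the limit of $\min_{\prob}\Flp = L^p/\lan(\Omega\setminus\Sigma_L)$ and using $\Fp(\mu_\infty)=\min_{\prob}\Fp$. For \eqref{asopt} one extra measure-theoretic remark is needed: for a square $Q\subset\Omega$, the ratio $\haus(\Sigma_L\cap Q)/\haus(\Sigma_L)$ is exactly $\mu_{\Sigma_L}(Q)$, but since $\chi_Q$ is not continuous this identity does not pass to the limit merely from weak* convergence against continuous test functions. However, $\mu_\infty$ is absolutely continuous with respect to Lebesgue measure and $\partial Q$ has Lebesgue measure zero, so $\mu_\infty(\partial Q)=0$; the Portmanteau theorem then yields $\mu_{\Sigma_L}(Q)\to\mu_\infty(Q)=\int_Q f\,dx$, as required. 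I expect this Portmanteau step to be the only mildly delicate point of the argument; all the real substance is packed into Theorem \ref{teogam2}, which we are assuming here.
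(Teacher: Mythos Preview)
Your proof is correct and follows precisely the route the paper indicates: the paper gives no formal proof of this corollary but simply observes, in the sentence preceding it, that the conclusion follows from Theorem~\ref{teogam2}, the weak* compactness of $\prob$, and standard $\Gamma$-convergence theory (citing \cite{dalmaso}), together with the uniqueness of the minimizer $\mu_\infty$ computed just before. Your explicit justification of \eqref{asopt} via the Portmanteau theorem (using $\mu_\infty(\partial Q)=0$) is in fact more careful than the paper, which leaves that passage implicit.
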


This corollary formalizes the ansatz that, in order to
maximize the principal frequency of a membrane with density $\rho$ and Young modulus $\sigma$,
it is convenient
to concentrate the stiffening rib $\Si$ in those regions with higher ratio $\rho/\sigma$, with
a density proportional to
$(\rho/\sigma)^{1/p}$. In particular,  for a homogeneous membrane
with constant $\rho$ and $\sigma$, it is convenient to distribute
$\Sigma$ with, roughly speaking, a constant ratio of length per
unit area in $\Omega$.

Moreover, the proof of Theorem~\ref{teogam2} is constructive: it turns
out that certain comb-shaped patterns
(see Definition~\ref{def:comb}),  periodically reproduced inside $\Omega$ at
different scales, can be used to build
examples of
asymptotically optimal sets (that is, those sets $\Sigma_L$ satisfying~\eqref{asoptbis}).

In section \ref{sec:max} we will also see that the
eigenvalue problems \eqref{prob2}, as $p\to\infty$ with $L$ being fixed,
converge to the so called \emph{maximum distance problem},
for which several qualitative results on the minimizers
have been proved in \cite{mipast} and \cite{paoste}.

Finally, let us mention that Theorem~\ref{teogam2} may be considered as the
``eigenvalue counterpart''
of the $\Gamma$-convergence results obtained
in \cite{mostil} and \cite{butsan} for
\emph{average distance} and \emph{compliance} problems.

\section{Estimates for the first eigenvalue under length constraints}

Throughout the paper,
we denote by $\mathop{d}(x,C)=min_{y\in C}|y-x|$ the distance
function to the set $C$, a generic closed subset of $\R^2$.
Moreover, we denote by $\meas(E)$ the two-dimensional Lebesgue
measure and by $\haus(E)$ the one-dimensional Hausdorff measure of
a Borel set $E\subset \R^2$. We will deal with the level sets of the
distance function, and in particular we need
the following result
proved in \cite{til} (see also Lemma 4.2 in \cite{mostil}).
\begin{lemma}\label{lemmaarea}
Fix $L>0$ and $\Si\in\Alo$. For $t\geq 0$ let
\begin{equation}\label{sublevel}
A_t=\{x\in\Om\,|\,\mathop{d}(x,{\Si\cup \partial \Om})< t\}
\end{equation}
be the sublevel set of the distance function  to
$\Si\cup\partial\Om$. If $\kappa$ is the number of connected components
of $\partial\Om$ and
\begin{equation}\label{deft}
 \overline{t}:=\dfrac{\meas(\Om)}
 {\left(\haus(\Si\cup\partial\Om)+\sqrt{\haus(\Si\cup\partial\Om)^2+(\kappa+1)\pi\meas(\Om)}\right)}
\end{equation}
is the positive root of the quadratic equation
$2\haus(\Si\cup\partial\Om)\overline{t}+(\kappa+1)\pi
\overline{t}^2=\meas(\Om)$, then for every $t\geq 0$
\begin{equation}
\label{www} \meas(A_t)\leq H(t):=
\begin{cases}
2\haus(\Si\cup\partial\Om)t+(\kappa+1)\pi t^2 \qquad &\text{if $t\leq\overline{t}$,}\\
\meas(\Om)\qquad &\text{if $t>\overline{t}$.}
\end{cases}
\end{equation}
\begin{remark}
The number of connected components of $\partial \Om$ is
necessarily finite,  since $\partial \Om$ is Lipschitzian
and compact. Similarly, also $\haus(\partial \Om)$ is finite.
\end{remark}

\begin{remark}\label{rema2}
From the definition of $\overline{t}$ in Lemma~\ref{lemmaarea} we
see that the function $H(t)$ in \eqref{www} is Lipschitzian and
increasing. Letting
$T=\max_{x\in\overline\Omega}\mathop{d}(x,{\Si\cup\partial\Om})$,
since $\overline{A_T}=\overline{\Omega}$ while
$H(t)<\meas(\Omega)$ for $t<\overline{t}$, we see that
\begin{equation}\label{lowerboundt}
0<\overline{t}\leq
T:=\max_{x\in\overline\Omega}\mathop{d}(x,{\Si\cup\partial\Om})
\end{equation}
and
\begin{equation}\label{HT}
\meas(A_T)=\meas(\Omega)=H(T).
\end{equation}
Moreover, $H'(t)\equiv 0$ for $t>\overline{t}$.
\end{remark}

\end{lemma}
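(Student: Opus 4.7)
The plan is to bound $\meas(A_t)$ by decomposing $\Si \cup \partial\Om$ into its (at most $\kappa+1$) connected components and applying a standard Minkowski tube estimate to each. The key auxiliary ingredient is the classical inequality: for every continuum $C \subset \R^2$ with $\haus(C) = \ell < \infty$,
\begin{equation}\label{tube}
\meas\bigl(\{x \in \R^2 : d(x, C) \leq t\}\bigr) \leq 2\ell t + \pi t^2, \qquad t \geq 0,
\end{equation}
which is precisely the result proved in \cite{til} (and is sharp on straight segments). A standard route to \eqref{tube} is to parameterize $C$ by a $1$-Lipschitz curve of length $\leq 2\ell$ via an Eilenberg--Harrold-type argument, or alternatively to apply the coarea formula together with the perimeter estimate $\haus(\partial\{d(\cdot,C)=s\})\leq 2\ell + 2\pi s$ and integrate in $s\in(0,t)$.

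Given \eqref{tube}, the rest is bookkeeping. Since $\Si$ is a single continuum and $\partial\Om$ splits into $\kappa$ connected components (finite because $\partial\Om$ is compact and Lipschitz), the set $\Si \cup \partial\Om$ has at most $\kappa+1$ connected components $C_1, \ldots, C_m$ with $m \leq \kappa+1$, each a continuum of finite $\haus$-measure. Since $A_t \subset \bigcup_{j=1}^m \{x\in\R^2 : d(x, C_j) < t\}$, summing \eqref{tube} over the components yields
\[
\meas(A_t) \leq \sum_{j=1}^m \bigl(2\haus(C_j)\, t + \pi t^2\bigr) \leq 2\haus(\Si \cup \partial\Om)\, t + (\kappa+1)\pi t^2.
\]

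In addition, $A_t \subset \Om$ trivially gives $\meas(A_t) \leq \meas(\Om)$. The quadratic $q(t) := 2\haus(\Si \cup \partial\Om)\, t + (\kappa+1)\pi t^2$ is strictly increasing from $0$ and equals $\meas(\Om)$ precisely at $\overline{t}$, the positive root in \eqref{deft}. Hence $q(t) \leq \meas(\Om)$ for $t \in [0, \overline{t}]$ while $q(t) \geq \meas(\Om)$ for $t \geq \overline{t}$, and combining the two bounds reproduces exactly the piecewise function $H(t)$ of \eqref{www}.

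The only genuinely non-trivial step is the tube estimate \eqref{tube}; everything else is elementary set-theoretic decomposition plus the comparison of a quadratic with a constant. Thus my proof proposal is to quote \cite{til} for \eqref{tube} and devote the exposition to the decomposition into connected components (using the Lipschitzness of $\partial\Om$ to bound the number $\kappa$) and to the identification of $\overline{t}$ as the positive root of $q(t) = \meas(\Om)$.
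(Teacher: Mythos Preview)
Your proposal is correct. The paper does not give an independent proof of this lemma but simply cites \cite{til} (and Lemma~4.2 of \cite{mostil}); your sketch---reducing to the single-continuum tube estimate $\meas(\{d(\cdot,C)<t\})\leq 2\haus(C)\,t+\pi t^2$ and then summing over the at most $\kappa+1$ connected components of $\Si\cup\partial\Om$---is exactly the argument underlying those references.
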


We start by proving an upper bound for the first  eigenvalue
$\la_p(\Oms)$ (defined as in \eqref{deflap})
in terms of the length $\haus(\Si)$. As we
will see in Theorem \ref{theta}, this estimate is sharp
when $\haus(\Si)$ is large. Some of the techniques that we
use are refinements of those in \cite{til}, where a similar bound was proved for the
compliance functional.

\begin{theorem}\label{teotheta}
Let $\Omega\subset\R^2$ be a bounded, connected open set with a Lipschitz boundary $\partial\Omega$
made of $\kappa$ connected components.
For any $L$ and $\Si\in\Alo$, it holds
\begin{equation}\label{estb}
\lap(\Oms)\leq\frac{{\lapz}}{\left(2\overline{t}\right)^p}\left(1+\frac{(\kappa+1)\pi\overline{t}}{\haus(\Si\cup\partial\Om)}\right),
\end{equation}
where $\overline{t}$ is the number defined in \eqref{deft}.
\end{theorem}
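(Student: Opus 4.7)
The plan is to construct an explicit test function in the Rayleigh quotient \eqref{deflap} built from the one-dimensional first eigenfunction $u_1$ of \eqref{defLambda} and the distance $d(x):=\mathop{d}(x,\Si\cup\partial\Om)$. Letting $M:=u_1(1/2)$ denote the maximum of $u_1$, I would set
\[
\phi(t):=\begin{cases} u_1\bigl(t/(2\overline{t})\bigr) & 0\leq t\leq \overline{t},\\ M & t>\overline{t},\end{cases}\qquad u(x):=\phi(d(x)),
\]
so that $u\in W^{1,p}_0(\Oms)$ since $\phi(0)=0$ and $d=0$ on $\Si\cup\partial\Om$. The decisive features are that $\phi$ is non-decreasing and reaches its maximum at $\overline{t}$, while $\phi'$ is non-increasing on $[0,\overline{t}]$ and vanishes beyond; the latter uses the concavity of $u_1$ on $[0,1/2]$, a consequence of the Euler--Lagrange equation \eqref{eqei}.

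By the co-area formula, using $|\nabla d|=1$ a.e., both integrals in the Rayleigh quotient become Riemann--Stieltjes integrals against $\mu(t):=\meas(A_t)$. The next step is to translate the pointwise bound $\mu\leq H$ of Lemma~\ref{lemmaarea} into the two one-sided estimates
\[
\int_\Om|\nabla u|^p\,dx=\int_0^T|\phi'|^p\,d\mu\leq \int_0^{\overline{t}}|\phi'(t)|^p H'(t)\,dt,
\]
\[
\int_\Om|u|^p\,dx=\int_0^T\phi^p\,d\mu\geq \int_0^{\overline{t}}\phi(t)^p H'(t)\,dt,
\]
via Stieltjes integration by parts: since $d|\phi'|^p$ is a non-positive measure and $d\phi^p$ is a non-negative measure (both supported in $[0,\overline{t}]$), the bound $\mu\leq H$ acts in opposite directions on the two integrals. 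In the second estimate the boundary terms cancel thanks to $\mu(T)=\meas(\Om)=H(\overline{t})$ from Remark~\ref{rema2}.

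Finally, on $[0,\overline{t}]$ the affine function $H'(t)=2\haus(\Si\cup\partial\Om)+2(\kappa+1)\pi t$ satisfies the trivial bounds $2\haus(\Si\cup\partial\Om)\leq H'(t)\leq 2\bigl[\haus(\Si\cup\partial\Om)+(\kappa+1)\pi\overline{t}\bigr]$. Using the upper bound in the numerator and the lower bound in the denominator extracts exactly the factor $1+(\kappa+1)\pi\overline{t}/\haus(\Si\cup\partial\Om)$ appearing in \eqref{estb}. The remaining ratio $\int_0^{\overline{t}}|\phi'|^p\,dt\bigm/\int_0^{\overline{t}}\phi^p\,dt$ collapses to $\lapz/(2\overline{t})^p$ after the change of variable $s=t/(2\overline{t})$ and using the symmetry of $u_1$ about $1/2$, which makes the integrals on $[0,1/2]$ half of those on $[0,1]$ so the quotient equals $\lapz$.

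The main obstacle is justifying the Stieltjes integration by parts against the merely monotone function $\mu$, which can be handled either by a smooth regularisation or by standard $BV$ calculus. A separate short argument is needed for $p=1$, where $u_1$ does not exist in $W^{1,1}_0(0,1)$: one recovers the constant $\Lambda_1=2$ by approximating $\chi_{(0,1)}$ with suitable tent functions and passing to the limit.
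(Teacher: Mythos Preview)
Your proposal is correct and follows essentially the same route as the paper: build a test function from $u_1$ composed with the distance to $\Si\cup\partial\Om$, apply the coarea formula, use integration by parts together with the monotonicity of $|\phi'|^p$ and $\phi^p$ to replace $\meas(A_t)$ by $H(t)$, bound $H'$ above and below on $[0,\overline{t}]$, and rescale. The only cosmetic difference is that the paper works with a generic increasing concave $g$ and specializes to $u_1$ at the very end (handling $p=1$ with $g(z)=\min\{1,nz\}$ exactly as you suggest), whereas you fix $\phi$ from the outset.
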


\begin{proof}
By \eqref{deflap}, for any non-zero function $u\in  W_0^{1,p}(\Oms)$
we have
\begin{equation}\label{eqdim}
 \lap(\Oms)\leq \frac{\int_\Om |\nabla u(x)|^pdx}{\int_\Om |u(x)|^pdx},
\end{equation}
and a suitable choice of $u$
will lead to \eqref{estb}. More precisely, we choose $u$
depending on the distance function
\begin{equation}\label{functest}
u(x):=g(\mathop{d}(x,{\Si\cup\partial\Om})), \qquad x\in\Om,
\end{equation}
where $g:\R^+\to\R^+$ is $C^{1,1}$, increasing and concave, and
such that $g(0)=0$.  Then $u$ vanishes along $\Si\cup\partial\Om$,
is Lipschitzian and, in particular, it is an admissible function
for the Rayleigh quotient in \eqref{eqdim}.

We first estimate the numerator in \eqref{eqdim}. Since
$|\nabla\mathop{d}(x,{\Si\cup\partial\Om})|=1$ a.e., from the
coarea formula (see \cite{evagar}) we have
\begin{equation}\label{wwww}
\int_\Om |\nabla u(x)|^pdx = \int_\Om
g'(\mathop{d}(x,\Si\cup\partial\Om))^pdx= \int_0^{T} g'(t)^p
P(A_t,\Om) \,dt,
\end{equation}
where
$T=\max_{x\in\overline\Omega}\mathop{d}(x,{\Si\cup\partial\Om})$,
 $A_t$ is as in \eqref{sublevel} and $P(A_t,\Omega)$ is the perimeter of
$A_t$ in $\Om$ (see \cite{evagar} for more details on perimeters).
Recall that still from the coarea formula one has
$\meas(A_t)=\int_0^t P(A_t,\Om)dt$  for every $t>0$ and
hence
\[
P(A_t,\Om)=\frac{d}{dt}\meas(A_t)\quad \text{for a.e.
$t>0$}.
\]
Letting $G(t)=g'(t)^p$, as $G'(t)\leq 0$ by assumption, we can
integrate by parts in \eqref{wwww} and use \eqref{www}. Since
$\meas(A_0)=0$, we obtain
\[
\begin{split}
  &\int_\Om |\nabla u(x)|^pdx =-\int_0^{T}G'(t)\meas(A_t)\,dt +G(T)\meas(A_T)
\\
 \leq&-\int_0^{T}G'(t) H(t)\,dt +G(T)H(T).
\end{split}
\]
Since $H(0)=0$, we can integrate by parts the other way round,
thus obtaining
\begin{equation}\label{eqnum}
  \int_\Om |\nabla u(x)|^pdx \leq
\int_0^{T}g'(t)^p H'(t)\,dt=\int_0^{\overline{t}}g'(t)^p H'(t)\,dt
\end{equation}
where the last equality follows from Remark~\ref{rema2}

Similarly, letting $G(t)=g(t)^p$ and observing that now $G'(t)\geq
0$, recalling \eqref{www} and \eqref{HT} we can estimate from
below the denominator of \eqref{eqdim} as follows
\[
\begin{split}
  &\int_\Om |u(x)|^pdx
= \int_0^{T} g(t)^p P(A_t,\Om) \,dt =
-\int_0^{T}G'(t)\meas(A_t)\,dt +G(T)\meas(A_T)
\\
&\geq -\int_0^{T}G'(t) H(t)\,dt +G(T)H(T) = \int_0^{T}G(t)
H'(t)\,dt
 = \int_0^{\overline{t}}g(t)^p H'(t)\,dt.
\end{split}
\]
Now we plug the last estimate and \eqref{eqnum} into
\eqref{eqdim}: observing that
\[
2\haus(\Si\cup\partial\Om)\leq H'(t)\leq
2\haus(\Si\cup\partial\Om)+ 2(\kappa+1)\pi \overline{t},\quad t\in
(0,\overline{t})
\]
and changing variable $z=t/ (2\overline{t})$ in the two integrals, from
\eqref{eqdim} we find that
\begin{equation}\label{eqdim2}
 \lap(\Oms)\leq\left(1+\frac{(\kappa+1)\pi\overline{t}}{\haus(\Si\cup\partial\Om)}\right)
 \frac{1}{\left(2\overline{t}\right)^p}\frac{\int_0^{1/2}g'(z)^p\,dz}{\int_0^{1/2}g(z)^p\,dz},
\end{equation}
now valid for every $C^{1,1}$ function $g$ increasing and concave
on $[0,1/2]$, and such that $g(0)=0$. In fact, by a density
argument, we can relax $g\in C^{1,1}(0,1/2)$ to $g\in W^{1,p}(0,1/2)$. If
$p>1$, the first Dirichlet eigenfunction $u_1(z)$ of the
$p$-laplacian on $(0,1)$ is symmetric with respect to $y=1/2$
and,
from \eqref{eqei}
it follows that
$u_1$ is also increasing and concave on $[0,1/2]$. This means that
one can choose $g(z)=u_1(z)$ in \eqref{eqdim2} and this gives
\eqref{estb}, since the ratio of the two integrals then reduces to
$\lapz$.

Finally, if $p=1$, it suffices to let $g(z)=\min\{1,nz\}$ in \eqref{eqdim2}
and then let $n\to \infty$ (recall that $\Lambda_1=2$).
\end{proof}

\begin{remark}
It is clear from the proof that, in order to obtain \eqref{estb}, there
is nothing special with
$\Sigma\cup\partial\Omega$ except that this set supports the Dirichlet condition
associated with $\lambda_p(\Oms)$ through the function space $W^{1,p}_0(\Oms)$.
Indeed, the estimate still holds if one considers
the first eigenvalue with a Dirichlet condition prescribed
along any compact set $D\subset\overline{\Omega}$ such that $0<\haus(D)<\infty$,
having $\textsc{k}$ connected components. Of course, in this case, one has
to replace $\haus(\Sigma\cup\partial\Omega)$ with $\haus(D)$ and $\kappa+1$ with
\textsc{k}.
\end{remark}

\section{Proof of the $\Gamma$-convergence result}\label{sec:gamma}
In this section we will prove Theorem~\ref{teogam2},
first proving
the $\Gamma$-liminf and  the $\Gamma$-limsup inequalities up to a multiplicative constant
$\theta_p$,
defined as follows:
\begin{equation}\label{eq:theta}
  \theta_p:=\inf\left(\liminf_{n\to \infty} \frac{L_n^p}{\lap(Y\setminus\Sigma_n)}\right),
\end{equation}
where $Y=(0,1)^2$ is the unit square and
the infimum is over all sequences of numbers $L_n\to \infty $ and all sequences of sets $\Sigma_n\in{\mathcal A}_{L_n}$ such that
$\haus(\Sigma_n)=L_n$.
Then we will compute explicitly this constant, showing that it is the inverse of the first Dirichlet eigenvalue of the $p$-laplacian on the unit interval (Theorem \ref{theta}).

In the sequel
we will often
use the following well-known properties of the first Dirichlet eigenvalue:

\noindent - \emph{Monotonicity}: for any two bounded open sets $A\subset B$,
$\lan(A)\geq\lan(B)$.

\noindent - \emph{Splitting over connected components}: if $A$ can be written as $\bigcup A_i$ with pairwise disjoint open sets $A_i\not=\emptyset$
(e.g., its connected components), then $\lan(A)=\min_i \lan(A_i)$.

\noindent - \emph{Comparison with the homogeneous case}:
for any open set $D\subset\Omega$,
on comparing \eqref{eq:eigen2} and \eqref{deflap}
we have
\begin{equation}\label{analog}
\frac{\inf_D \sigma}{\sup_D \rho}\lambda_p(D)
\leq
\lan(D)
\leq
\frac{\sup_D \sigma}{\inf_D \rho}\lambda_p(D).
\end{equation}

\subsection{The $\Gamma$-liminf inequality}\label{gammainf}

We start proving that the $\Gamma$-liminf functional $\Flp$ is minorized by the limit functional $\Fp$ defined, up to $\theta_p$, by \eqref{gammalimittwo}. We shall use some of the ideas that were introduced in \cite{mostil}, see also \cite{butsan}.
\begin{proposition}
For every probability measure $\mu\in\prob$ and every sequence $\{\mu_L\}\subset\prob$ such that $\mu_L\weak\mu$, it holds
\begin{equation}\label{liminf2}
  \liminf_{L\to \infty} \Flp(\mu_L)\geq  \theta_p\esssup_{x\in\Om}\frac{\rho(x)}{\sigma(x) f(x)^p}.
\end{equation}
\end{proposition}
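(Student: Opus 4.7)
The plan is to localize the eigenvalue on a small square around a Lebesgue point $x_0\in\Om$ of $f$, rescale it to the unit square $Y$, and read the constant $\theta_p$ off its definition in \eqref{eq:theta}; the only real difficulty is that, after rescaling, the candidate set need not be connected.

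First I would reduce and localize. I may assume $\liminf_L \Flp(\mu_L)<\infty$, so along a subsequence (not relabeled) $\mu_L=\mu_{\Sigma_L}$ for some $\Sigma_L\in\Alo$ with $\ell_L:=\haus(\Sigma_L)\le L$; moreover $\ell_L\to\infty$, otherwise $\lan(\Oms_L)$ would stay bounded and $\Flp(\mu_L)\to\infty$. Fix a Lebesgue point $x_0$ of $f$ and a small open square $Q$ of side $r$ centered at $x_0$, with $Q\Subset\Om$ and $\mu(\partial Q)=0$ (which holds for a.e.\ $r$). Domain monotonicity of $\lan$ together with \eqref{analog} and the scaling identity $\lap(Q\setminus\Sigma_L)=r^{-p}\lap(Y\setminus\tilde\Sigma_L)$---where $\tilde\Sigma_L\subset\overline Y$ is the image of $\Sigma_L\cap\overline Q$ under the affine bijection $Q\to Y$ and $\tilde L_L:=\haus(\tilde\Sigma_L)=\haus(\Sigma_L\cap\overline Q)/r$---combined with $\haus(\Sigma_L\cap\overline Q)=\ell_L\mu_L(\overline Q)\le L\mu_L(\overline Q)$, yield
\[
\Flp(\mu_L)=\frac{L^p}{\lan(\Oms_L)}\ge \frac{\inf_Q\rho}{\sup_Q\sigma}\left(\frac{r^2}{\mu_L(\overline Q)}\right)^{\!p}\frac{\tilde L_L^{\,p}}{\lap(Y\setminus\tilde\Sigma_L)}.
\]

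The crux is then to prove $\liminf_L \tilde L_L^{\,p}/\lap(Y\setminus\tilde\Sigma_L)\ge\theta_p$. Here lies the main obstacle, since $\theta_p$ is an infimum over \emph{connected} sets while $\tilde\Sigma_L$ need not be connected. The idea is to build a connected $\hat\Sigma_L\subset\overline Y$ with $\hat\Sigma_L\supset\tilde\Sigma_L$ and $\haus(\hat\Sigma_L)=\tilde L_L(1+o(1))$; a natural candidate is $\hat\Sigma_L=\tilde\Sigma_L\cup\partial Y\cup G_{\eps_L}\cup T_L$, where $G_{\eps_L}$ is an auxiliary grid of mesh $\eps_L\to 0$ with $\eps_L^{-1}=o(\tilde L_L)$ and $T_L$ is a union of short segments attaching the remaining isolated components of $\tilde\Sigma_L$ to $G_{\eps_L}\cup\partial Y$. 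Since $\hat\Sigma_L\supset\tilde\Sigma_L$, monotonicity of the eigenvalue in the Dirichlet set gives $\lap(Y\setminus\hat\Sigma_L)\ge\lap(Y\setminus\tilde\Sigma_L)$, and $\haus(\hat\Sigma_L)\to\infty$; the definition of $\theta_p$ applied to the connected sequence $\hat\Sigma_L$ then produces
\[
\liminf_L \frac{\tilde L_L^{\,p}}{\lap(Y\setminus\tilde\Sigma_L)}\ge \liminf_L\Bigl(\frac{\tilde L_L}{\haus(\hat\Sigma_L)}\Bigr)^{\!p}\frac{\haus(\hat\Sigma_L)^p}{\lap(Y\setminus\hat\Sigma_L)}\ge \theta_p.
\]

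Finally, combining the two displays, taking $\liminf_L$, and letting $r\to 0$ closes the argument: weak* convergence together with $\mu(\partial Q)=0$ gives $\mu_L(\overline Q)\to\mu(\overline Q)$, continuity of $\rho,\sigma$ gives $\inf_Q\rho/\sup_Q\sigma\to\rho(x_0)/\sigma(x_0)$, and Lebesgue's differentiation theorem gives $\mu(\overline Q)/r^2\to f(x_0)$, so that
\[
\liminf_L \Flp(\mu_L)\ge \theta_p\,\frac{\rho(x_0)}{\sigma(x_0)f(x_0)^p}
\]
at a.e.\ $x_0\in\Om$ (if $f(x_0)=0$ the right-hand side is $+\infty$, in accord with $r^2/\mu(\overline Q)\to\infty$), and the essential supremum over $x_0$ gives \eqref{liminf2}. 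The delicate technical point I anticipate is the construction of $\hat\Sigma_L$ with $\haus(\hat\Sigma_L)-\tilde L_L=o(\tilde L_L)$: the number of components of $\tilde\Sigma_L$ is not a priori controlled, so one must carefully balance the grid cost $\sim\eps_L^{-1}$ against the total connector cost $\sim\eps_L\cdot(\text{number of isolated components})$ and check that both contributions are $o(\tilde L_L)$.
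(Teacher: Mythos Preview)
Your strategy matches the paper's proof almost line for line: pass to a subsequence with finite limit, localize on a small square $Q\Subset\Omega$, use monotonicity and \eqref{analog}, rescale to the unit square $Y$, and read off $\theta_p$ from \eqref{eq:theta}. Your displayed inequality is exactly the paper's \eqref{dimliminf21}, and your final passage to the limit $r\to 0$ via Lebesgue differentiation is equivalent to the paper's (the paper adds $\eps\,\meas(Q)^p$ to the denominator instead of selecting $Q$ with $\mu(\partial Q)=0$, but both devices serve the same purpose).

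Where you diverge is precisely at the step you flag as ``delicate'', and here you are making life much harder than necessary. Your grid-plus-connector scheme is not only awkward---as you note, the number of components of $\tilde\Sigma_L$ is uncontrolled, and there is no obvious way to make the total connector length $o(\tilde L_L)$---it is also unnecessary. The paper dispatches the issue in one line by exploiting the global connectedness of $\Sigma_L$. First one shows $\haus(\Sigma_L\cap Q)\to\infty$: since $\lan(\Oms_L)\to\infty$, monotonicity gives $\lap(Q\setminus\Sigma_L)\to\infty$, whence the maximum of $\mathop{d}(\cdot,\Sigma_L\cup\partial Q)$ over $Q$ tends to zero, and Lemma~\ref{lemmaarea} (with $\Omega:=Q$) forces $\haus(\Sigma_L\cap Q)\to\infty$ via \eqref{deft}--\eqref{lowerboundt}. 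This is \eqref{dimtheta2} in the paper, and you need it anyway, since your choice $\eps_L^{-1}=o(\tilde L_L)$ presupposes $\tilde L_L\to\infty$. Second, applying \eqref{dimtheta2} to a square disjoint from $\overline Q$ shows that $\Sigma_L$ also meets $\Omega\setminus\overline Q$; since $\Sigma_L$ is a continuum, the boundary-bumping theorem then gives that \emph{every} connected component of $\Sigma_L\cap\overline Q$ meets $\partial Q$. Hence $\hat\Sigma_L:=\tilde\Sigma_L\cup\partial Y$ is already connected, with $\haus(\hat\Sigma_L)\le\tilde L_L+4$, and your ratio $\tilde L_L/\haus(\hat\Sigma_L)\to 1$ is immediate. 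No grid $G_{\eps_L}$, no connectors $T_L$.

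In short: your outline is correct and is the paper's argument; you are only missing the observation that connectedness of $\Sigma_L$ propagates to $\tilde\Sigma_L\cup\partial Y$ for free, which dissolves the ``delicate technical point'' entirely.
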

\begin{proof} Consider an arbitrary subsequence (still denoted by $\{\mu_L\}$ for simplicity,
but $L$ should be regarded as $L_n$ etc.) for which the
\[
\lim_{L\to \infty} \Flp(\mu_L)
\mathop{=}^{\text{by \eqref{fl}}}\lim_{L\to \infty}\frac{L^p}{\lan(\Omega\setminus\Sigma_L)}
\]
exists and is finite. By finiteness of the limit, recalling \eqref{fl}
we can assume that each $\mu_L$ has the form \eqref{probability} for some $\Sigma_L\in{\mathcal A}_L$, so that
\begin{equation}\label{rrr}
    \mu_L(E)=\frac{\haus(\Sigma_L\cap E)}{\haus(\Sigma_L)}\quad\text{for all Borel sets $E\subset\overline{\Omega}$.}
\end{equation}
Moreover, finiteness also entails that $\lambda_p(\Omega\setminus\Sigma_L)$ (being comparable with
$\lan(\Omega\setminus\Sigma_L)$) tends to infinity: hence, if we choose any
open square $Q\subset\Omega$, by monotonicity
also $\lambda_p\left(Q\setminus\Sigma_L\right)\to \infty$, and in particular
the distance function $\mathop{\rm d}(x,Q\setminus\Sigma_L)$
uniformly tends to zero (as $L\to \infty$) over $Q$
(otherwise  $Q\setminus\Sigma_L$ would contain a ball $B$ of radius bounded away from zero,
and $\lambda_p\left(Q\setminus\Sigma_L\right)$ would be bounded from above).
Hence, applying Lemma~\ref{lemmaarea} with $\Omega:=Q$ and $\Sigma:=\Sigma_L$, we see from
\eqref{lowerboundt} that $\overline{t}\to 0$ as $L\to \infty$, and by \eqref{deft}
this means that
\begin{equation}\label{dimtheta2}
\lim_{L\to \infty} \haus(\Si_L\cap Q)=\infty\quad\text{(for every open square $Q\subset\Omega$).}
\end{equation}
Now fix $\eps>0$,
and consider an open square $Q\subset\Omega$,  whose role is to localize the estimate on $\Flp$. From
 the monotonicity of $\lap$ and \eqref{analog} 
it follows that,
\begin{equation*}
\frac{L^p}{\lan(\Oms_L)}
\geq
\frac{L^p}{\lan(Q\setminus\Sigma_L)}
\geq
\frac{\inf_Q \rho}{\sup_Q \sigma} \frac{L^p}{\lap(Q\setminus \Si_L)}
\end{equation*}
which, using $L\geq \haus(\Sigma_L)$  and \eqref{rrr}, gives
\begin{equation}\label{dimliminf21}
\frac{L^p}{\lan(\Oms_L)}
\geq
\frac{\inf_Q \rho}{\sup_Q \sigma}\,
\frac {1} {\mu_L(Q)^p}\,
\frac{\haus(\Si_L\cap Q)^p}{\lap(Q\setminus\Sigma_L)}.
\end{equation}
From $\mu_L\weak\mu$, we have that $\limsup \mu_L(Q)\leq \mu\left(\overline{Q}\right)$ and hence
\begin{equation}\label{eq1}
  \liminf_{L\to \infty} \frac{1}{\mu_L(Q)^p} \geq  \frac{1}{\mu\left(\overline{Q}\right)^p+\ep\meas(Q) ^p},
\end{equation}
where the quantity $\ep\meas(Q) ^p$ serves to avoid vanishing denominator, for the moment.
Moreover, if $a$ is the side-length of $Q$ and $Y=a^{-1}Q$ is a unit square, by scaling
\begin{equation}\label{uuu}
 \frac{\haus(\Si_L\cap Q)^p}{\la_p(Q\setminus\Sigma_L)}= \frac{a^p\haus(a^{-1}\Si_L\cap Y)^p}{a^{-p}\la_p(Y\setminus a^{-1}\Si_L)}
 = \meas(Q) ^p\frac{\haus(a^{-1}\Si_L\cap Y)^p}{\la_p(Y\setminus a^{-1}\Si_L)}.
\end{equation}
Now let $\Sigma_n:=\partial Y\cup(a^{-1}\Sigma_L\cap Y)$, and observe that $\Sigma_n$ is connected,
since by \eqref{dimtheta2} $\Sigma_L$ must cross the boundary of $Q$.
Hence, using \eqref{dimtheta2} again, by translation invariance we can use \eqref{eq:theta}
and, from \eqref{uuu}, estimate
\begin{equation}\label{eq2}
 \liminf_{L\to \infty}\frac{\haus(\Si_L\cap Q)^p}{\lap(Q\setminus\Sigma_L)}\geq \theta_p\meas(Q) ^p.
\end{equation}
Now combining \eqref{eq1} and \eqref{eq2} with \eqref{dimliminf21},  we obtain the estimate
\[
\liminf_{L\to \infty}\Flp(\mu_L)= \liminf_{L\to \infty}\frac{L^p}{\lan(\Oms_L)}
\geq
\frac{\inf_Q \rho}{\sup_Q \sigma}\,
 \frac{\theta_p\meas(Q) ^p}{\mu\left(\overline{Q}\right)^p+\ep\meas(Q) ^p},
\]
for every open square $Q\subset\Omega$.
Thus, if $f\in L^1(\Omega)$ is the density of $\mu$
and
$x\in\Omega$ is a Lebesgue point for $f$, letting $Q$ shrink towards $x$,
from Radon-Nikodym Theorem
we find that
\[
\liminf_{L\to \infty}\Flp(\mu_L)
  \geq
  \theta_p \frac{\rho(x)}{\sigma(x)(f(x)^p+\ep)} \quad \text{for a.e. $x\in\Om$}.
\]
Finally, letting $\ep\downarrow 0$ one obtains \eqref{liminf2}.
\end{proof}

\subsection{The $\Gamma$-limsup inequality}\label{gammasup} As in \eqref{eq:theta}, we denote by $Y$ the unit square $(0,1)^2$.

\begin{lemma}\label{lemma1}
Given $\eps>0$, there exists a compact connected set $\Sigma\subset\overline{Y}$ such that
\begin{equation}\label{estim}
 \frac{\haus(\Sigma)^p}{\lap(Y\setminus\Sigma)}<(1+\eps) \theta_p
\end{equation}
and
\begin{equation}\label{stl}
\partial Y\subset\Sigma.
\end{equation}
\end{lemma}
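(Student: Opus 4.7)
My strategy is to exploit the very definition of $\theta_p$ as an infimum, and then to perform a bounded-length perturbation of a near-optimal configuration so as to absorb $\partial Y$.

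First I would dispose of the trivial case $\theta_p=+\infty$, in which the inequality \eqref{estim} is automatically satisfied by any continuum containing $\partial Y$ (e.g.\ $\Sigma=\partial Y$ itself). Assume henceforth $\theta_p<\infty$. By definition \eqref{eq:theta}, I can select a sequence of numbers $L_n\to\infty$ and sets $\Sigma_n\in\mathcal{A}_{L_n}$ of continua in $\overline{Y}$ with $\haus(\Sigma_n)=L_n$ such that
\[
\lim_{n\to\infty}\frac{L_n^p}{\lap(Y\setminus\Sigma_n)}=\theta_p.
\]

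Next, for each $n$ I would form the modified set
\[
\widetilde\Sigma_n:=\Sigma_n\cup\partial Y\cup\gamma_n,
\]
where $\gamma_n$ is taken to be empty if $\Sigma_n\cap\partial Y\neq\emptyset$, and otherwise a straight segment of length at most $\sqrt 2$ joining a point of $\Sigma_n$ to a point of $\partial Y$ (the diameter of $Y$ being $\sqrt 2$). By construction $\widetilde\Sigma_n$ is compact and connected, contains $\partial Y$, and
\[
\haus(\widetilde\Sigma_n)\leq L_n+4+\sqrt 2.
\]
Moreover, since $Y\setminus\widetilde\Sigma_n\subset Y\setminus\Sigma_n$, the monotonicity of $\lap$ recalled in Section~\ref{sec:gamma} gives $\lap(Y\setminus\widetilde\Sigma_n)\geq\lap(Y\setminus\Sigma_n)$.

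Combining these two facts, I obtain
\[
\frac{\haus(\widetilde\Sigma_n)^p}{\lap(Y\setminus\widetilde\Sigma_n)}\leq\left(1+\frac{4+\sqrt 2}{L_n}\right)^{\!p}\frac{L_n^p}{\lap(Y\setminus\Sigma_n)},
\]
and the right-hand side tends to $\theta_p$ as $n\to\infty$. Consequently, for $n$ sufficiently large the ratio on the left is strictly less than $(1+\ep)\theta_p$, and $\Sigma:=\widetilde\Sigma_n$ meets both requirements \eqref{estim} and \eqml{stl}.

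I do not expect any real obstacle here: the only point to check is that the additive perturbation by $\partial Y$ and a short connecting arc contributes a \emph{bounded} amount of length, while the scaling $L_n\to\infty$ ensures that this bounded contribution is negligible when raised to the $p$-th power and divided by $L_n^p$. The role of the monotonicity of $\lap$ with respect to domain inclusion is essential, as it lets one pass from $\Sigma_n$ to the enlarged set $\widetilde\Sigma_n$ without losing control on the denominator.
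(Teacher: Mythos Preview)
Your proposal is correct and follows essentially the same route as the paper: pick a near-optimal $\Sigma_n$ from the definition of $\theta_p$, adjoin $\partial Y$ together with a short connecting segment, and use monotonicity of $\lambda_p$ plus $L_n\to\infty$ to absorb the bounded extra length. The only cosmetic differences are that the paper bounds the connecting segment by $1/2$ (the inradius of $Y$) rather than $\sqrt 2$, and it does not bother with the case $\theta_p=+\infty$ since Theorem~\ref{theta} later shows $\theta_p=1/\Lambda_p<\infty$.
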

\begin{proof}
According to how $\theta_p$ was defined in \eqref{eq:theta}, we can take a sequence
of sets $\{\Sigma_n\}$ satisfying $L_n:=\haus(\Sigma_n)\to\infty$,
$\Sigma_n\in{\mathcal A}_{L_n}(Y)$ and
\[
\frac {\haus(\Sigma_n)^p}{\lap(Y\setminus\Sigma_n)}<(1+\eps/2)\theta_p\quad\forall n\geq 1.
\]
If $n$ is large enough, then clearly
\[
\frac {\bigl(\haus(\Sigma_n)+\haus(\partial Y)+1/2\bigr)^p}{\lap(Y\setminus\Sigma_n)}<(1+\eps)\theta_p,
\]
and defining $\Sigma:=\Sigma_n\cup\partial Y\cup S$, where $S$ is any segment of length at most $1/2$
that connects $\Sigma_n$ to $\partial Y$,
we can guarantee \eqref{estim} and
\eqref{stl} (note that $\lap(Y\setminus\Sigma)\geq \lap(Y\setminus\Sigma_n)$ by monotonicity).
\end{proof}

We start proving the $\Gamma$-limsup inequality for a particular class of measures.

\begin{definition}\label{defstep2}
For $s>0$, let $\chb$ denote the collection of all those open squares $Q_i\subset\R^2$, with side-length $s$
and corners
on the lattice $(s{\mathbb Z})^2$, such that $Q_i\cap \Omega\not=\emptyset$.
We say that a probability measure $\mu\in\prob$ is \emph{fitted to $\chb$}
if $\mu$ is absolutely continuous, with a density $f(x)>0$ which is \emph{constant}
on each set of the form $Q_i\cap \Omega$ with $Q_i\in\chb$. In formulae,
\begin{equation}\label{mufit}
 d\mu=f(x) dx, \qquad f(x)= \sum_{i} \alpha_i \chi_{\Omega_i}(x), \quad \Omega_i=\Om\cap Q_i,\quad
 \chb=\{Q_i\}
\end{equation}
where the constants $\alpha_i>0$ satisfy (since $\mu(\Omega)=1$)
the normalization condition
\begin{equation}\label{eq:probmeas2}
\sum_{i} \alpha_i\meas(\Omega_i) = 1.
\end{equation}
\end{definition}

\begin{proposition}
If $\mu\in\prob$ is fitted to $\chb$ for some $s>0$, then for every
$\ep>0$ there exists a sequence $\{\mu_L\}$ in $\prob$ such that $\mu_L\weak \mu$ and
\begin{equation}\label{gammalimsup2}
 \limsup_{L\to \infty} \Flp(\mu_L)\leq (1+\ep) \theta_p \esssup_{x\in\Om}\frac{\rho(x)}{\sigma(x)f(x)^p}.
\end{equation}
\end{proposition}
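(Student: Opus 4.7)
The plan is to build $\Sigma_L$ by tiling each region $\Omega_i=\Omega\cap Q_i$ with many small scaled translates of a fixed building block $\Sigma_\delta\subset\overline{Y}$ supplied by Lemma~\ref{lemma1}, the tile scale $r_i$ used inside $\Omega_i$ being tuned so that the length deposited in $\Omega_i$ is asymptotically $L\alpha_i\meas(\Omega_i)$. The correct density will then force $\mu_L\weak\mu$, while the eigenvalue estimate will follow by splitting $\Omega\setminus\Sigma_L$ into connected components and rescaling.

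More concretely, I would fix $\delta\in(0,\varepsilon)$ small (to be tuned in terms of $\varepsilon$), take $\Sigma_\delta$ from Lemma~\ref{lemma1} so that $\haus(\Sigma_\delta)^p<(1+\delta)\theta_p\lap(Y\setminus\Sigma_\delta)$ and $\partial Y\subset\Sigma_\delta$, and set
\[
r_i=\frac{\haus(\Sigma_\delta)}{L\alpha_i},
\]
rounded to the nearest value of the form $s/n_i$ with $n_i\in\mathbb{N}$, so that $n_i^2$ translates of $r_i\Sigma_\delta$ tile $Q_i$ exactly. The crucial inclusion $\partial Y\subset\Sigma_\delta$ ensures that these translates are automatically connected within $Q_i$; joining neighboring $Q_i$'s along the portions of their common edges that lie in $\Omega$ and attaching a short segment to $\partial\Omega$ adds only $O(1)$ length (independent of $L$). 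The resulting compact connected set $\Sigma_L\subset\overline{\Omega}$ has total length $L(1+o(1))$ and satisfies $\haus(\Sigma_L\cap\Omega_i)=L\alpha_i\meas(\Omega_i)(1+o(1))$; combined with \eqref{eq:probmeas2}, this yields $\mu_L:=\mu_{\Sigma_L}\weak\mu$.

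For the eigenvalue, I would split $\Omega\setminus\Sigma_L$ into its connected components. A typical interior component is the scaled copy $r_iD$ of the fixed open set $D:=Y\setminus\Sigma_\delta$, and any component meeting $\partial\Omega$ is contained in such a copy, so by monotonicity it can be ignored in the lower bound. By splitting, by the scaling $\lap(rD)=r^{-p}\lap(D)$, and by \eqref{analog},
\[
\lan(\Omega\setminus\Sigma_L)\;\geq\;\min_C\frac{\inf_C\sigma}{\sup_C\rho}\cdot\frac{\lap(D)}{r_{i(C)}^p},
\]
and plugging $r_i=\haus(\Sigma_\delta)/(L\alpha_i)$ together with the estimate on $\Sigma_\delta$ gives
\[
\frac{L^p}{\lan(\Omega\setminus\Sigma_L)}\;\leq\;(1+\delta)\theta_p\max_C\frac{\sup_C\rho}{\inf_C\sigma\,\alpha_{i(C)}^p}.
\]
As $L\to\infty$ the diameters $r_i$ tend to zero, so by continuity of $\rho,\sigma$ the right-hand side converges to $(1+\delta)\theta_p\esssup_{x\in\Omega}\rho(x)/(\sigma(x)f(x)^p)$. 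Choosing $\delta$ small enough relative to $\varepsilon$ then yields \eqref{gammalimsup2}.

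The main technical obstacle is the careful handling of boundary tiles, namely those cut by $\partial\Omega$ or by the mismatch between the distinct scales $r_i$ and $r_j$ used in adjacent squares $Q_i,Q_j$. I must verify both that their cumulative length is $o(L)$ (needed to preserve $\mu_L\weak\mu$) and that they do not spoil the lower bound on $\lan$. The first is a direct bookkeeping using $\haus(\partial\Omega)<\infty$ and $r_i\to 0$; the second is automatic, since each such component sits inside a full scaled tile $r_iD$, so monotonicity of $\lan$ transfers the same estimate.
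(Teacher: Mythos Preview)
Your approach is correct and essentially identical to the paper's proof, which also tiles each $Q_i$ with $k_i^2$ scaled copies of a fixed tile $\Sigma$ from Lemma~\ref{lemma1} (with $k_i\sim sL\alpha_i/L_e$), intersects with $\Omega$, and then estimates $\lan(\Omega\setminus\Sigma_L)$ via splitting over connected components, monotonicity, scaling, and \eqref{analog}. The only refinements the paper adds are: it includes all of $\partial\Omega$ in $\Sigma_L$ (after first shrinking $s$ so that no component of $\partial\Omega$ fits inside a single $Q_i$) to guarantee connectedness after the intersection with $\Omega$---your ``short segment to $\partial\Omega$'' is not enough if $\Omega$ has holes---and it works with the \emph{effective length} $L_e:=\haus(\Sigma\cap[0,1)^2)$ rather than $\haus(\Sigma)$ to account correctly for shared tile edges, so that in fact your formula yields $\haus(\Sigma_L\cap\Omega_i)\sim L\alpha_i\meas(\Omega_i)\cdot L_e/\haus(\Sigma_\delta)$ rather than $L\alpha_i\meas(\Omega_i)$; since this factor is uniform in $i$ and strictly less than $1$, both $\mu_L\weak\mu$ and $\Sigma_L\in\Alo$ still follow, and your eigenvalue bound is unaffected.
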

\begin{proof}
Consider $\mu$ fitted to $\chb$, with the same notation as in \eqref{mufit}. As $\partial\Omega$ is Lipschitzian, by replacing (if necessary) $s$
with $s/2^k$ for some $k>1$ (thus keeping $\mu$ fitted to $\chb$), we may assume that $s$ is so small that
\begin{equation}\label{assu1}
\text{no connected component of $\partial\Omega$ is strictly contained in any square $Q_i\in\chb$.}
\end{equation}
Given a small $\eps>0$, we will construct the measures $\mu_L$ of the form \eqref{rrr} for suitable sets $\Sigma_L\in\Alo$.
We will call ``tile'' the set  $\Sigma$ obtained from Lemma~\eqref{lemma1}, satisfying \eqref{estim}
and \eqref{stl}. We define its "effective length" $L_e$ as the number
\begin{equation}\label{Le}
L_e:=\haus\left(\Sigma\cap [0,1)^2\right).
\end{equation}
Given $L$ large enough, the set $\Sigma_L$ is obtained through the following construction:
\begin{itemize}
    \item [(i)] fix a set $\Omega_i$ in \eqref{mufit} and scale down the tile $\Sigma$ to a factor $s/k_i$, where
the integer $k_i$ is defined as
\begin{equation}\label{defki}
k_i=k_i(L)=\left\lfloor \frac{s\alpha_i (L-\sqrt{L})}{L_e}\right\rfloor.
\end{equation}
    \item [(ii)] Put $k_i^2$ copies of the rescaled tile inside the closed
    square $\overline{Q_i}$ corresponding to $\Omega_i$, as to form a $k_i\times k_i$
    chalkboard, and intersect with $\Omega$ (the resulting set is contained in $\overline{\Omega_i}$).
    \item [(iii)] Repeat for each $\Omega_i$, and take the union. Finally, add $\partial\Omega$ to the resulting set.
\end{itemize}
Formally, if $l_i\in\R^2$ denotes the lower-left corner of the square $Q_i$, this construction amounts to defining
\[
\Sigma_L:=\partial\Omega\cup \left(\bigcup_i \bigcup_{0\leq m,n<k_i} \Sigma_{i,m,n}\cap\Omega\right),\quad
\Sigma_{i,m,n}=l_i+s(m/k_i,n/k_i)+(s/k_i)\Sigma.
\]
The main idea is to put several \emph{microtiles} $\Sigma_{i,m,n}$ side by side within each square
$\overline{Q_i}$, with a density (length per unit area) therein roughly proportional to $\alpha_i$, in such a way that
the total length is about $L$ (subtracting $\sqrt{L}$ in \eqref{defki} serves to save some $o(L)$ of length, to compensate
for $\haus(\partial\Omega)$ and boundary effects due to cut-out microtiles close to $\partial\Omega$).

Building on \eqref{assu1} and \eqref{stl}, a little thought reveals that the
set $\Sigma_L$ thus constructed is connected.

Each set $\overline{\Omega_i}$ contains a certain number $W_i(L)$ of \emph{whole} microtiles $\Sigma_{i,m,n}$
and, if $Q_i$ crosses $\partial\Omega$, also a certain number 
of \emph{incomplete} microtiles
$\Sigma_{i,m,n}\cap \Omega$
(those that have really been cut out
by intersection with $\Omega$, in step (ii) above).
As $\partial\Omega$ is Lipschitzian, however, and $\mathop{\rm diam}(\Sigma_{i,m,n})=O(1/L)$,
for large $L$ there are at most $C_1L$ incomplete microtiles,
where $C_1$ depends on $\mu$, $\haus(\Sigma)$ and $\partial\Omega$ but not on $L$. Moreover, as $\haus(\Sigma_{i,m,n})=O(1/L)$,
the incomplete tiles contribute to $\haus(\Sigma_L)$ by, at most, a constant length $C_2$ independent of $L$.
Hence, since clearly $W_{i}(L)\leq\meas(\Omega_i)k_i^2/s^2$,
\[
\begin{split}
\haus(\Sigma_L)&\leq \haus(\partial\Omega)+C_2+\sum_i W_i(L)\frac {s L_e}{k_i}
\leq
C_3+\sum_i \meas(\Omega_i)\frac {k_i L_e}{s}
\\
&\leq
C_3+
(L-\sqrt{L})\sum_i \meas(\Omega_i)\alpha_i
\leq L
\end{split}
\]
provided $L$ is large enough (recall \eqref{eq:probmeas2}). This shows that $\Sigma_L\in\Alo$ for large $L$, hence defining
$\mu_L$ as in \eqref{rrr}, from \eqref{fl} we obtain
\begin{equation}\label{lss}
 \limsup_{L\to \infty} \Flp(\mu_L)
 =
  \limsup_{L\to \infty} \frac{L^p}{\lan(\Omega\setminus\Sigma_L)}.
\end{equation}
Note that, in fact, $\haus(\Sigma_L)\sim L$ as $L\to\infty$. Indeed,
since $\overline{\Omega_i}$ contains $W_i(L)$ whole microtiles (each contributing an effective length $sL_e/k_i$)
and $W_i(L)\sim \meas(\Omega_i)k_i^2/s^2$ as $L\to\infty$, from \eqref{defki} and \eqref{eq:probmeas2}
\[
\haus(\Sigma_L)\geq
\sum_i W_i(L)\frac {s L_e}{k_i}
\sim
\sum_i \meas(\Omega_i)\frac {k_i L_e}{s}
\sim L\sum_i \meas(\Omega_i)\alpha_i
= L.
\]
Now, as the restriction of $\mu_L$ to each $\Omega_i$ is
a periodic homogenization of the same pattern with period $s/k_i$,
it is clear that the $\mu_L$ converge, as $L\to\infty$, to some
measure in $\prob$ which is fitted to $\chb$. More precisely,
recalling \eqref{mufit}
\[
\begin{split}
\mu_L\left(\overline{\Omega_i}\right)&=\frac{\haus\left(\Sigma_L\cap \overline{\Omega_i}\right)}{\haus(\Sigma_L)}
\sim
\frac{W_i(L)sL_e/k_i}
{L}
\sim
\frac{\bigl(\meas(\Omega_i)k_i^2/s^2\bigr)sL_e/k_i}
{L}\\
&=
\frac{\meas(\Omega_i)k_i L_e }
{sL}
\sim \meas(\Omega_i)\alpha_i=\mu(\Omega_i),
\end{split}
\]
and we see that in fact $\mu_L\mathop{\rightharpoonup}^* \mu$ as $L\to\infty$.

To estimate $\lan(\Omega\setminus\Sigma_L)$ in \eqref{lss}, fix $L$ and observe that $\Omega\setminus\Sigma_L$
consists, by construction, of several small connected components (at least one for each microtile
$\Sigma_{i,m,n}$, due to \eqref{stl}). Therefore, since the first Dirichlet eigenvalue splits over connected components, for a suitable triplet $i,m,n$,
\[
\lan(\Omega\setminus\Sigma_L)=\lan(D),\quad D:=\Omega\cap (Y_{i,m,n}\setminus\Sigma_{i,m,n})
\]
where $Y_{i,m,n}$ is the open square
of side $s/k_i$, contained in $Q_i$,
that frames $\Sigma_{i,m,n}$.
Therefore, using \eqref{analog},
\[
\frac{L^p}
{\lan(\Omega\setminus\Sigma_L)}
=
\frac{L^p}
{\lan(D)}
\leq
\frac{\sup_D \rho}{\inf_D \sigma}\,
\frac{L^p}
{\lambda_p(D)}.
\]
Moreover, by monotonicity, scaling, \eqref{estim} and \eqref{Le},
\[
\lambda_p(D)\geq \lambda_p(Y_{i,m,n}\setminus\Sigma_{i,m,n})
=
\frac {k_i^p}{s^p} \lambda_p(Y\setminus\Sigma)
\geq
\frac{\bigl(k_i  \haus(\Sigma)\bigr)^p}{s^p(1+\eps)\theta_p}
\geq
\frac{(k_i L_e)^p}{s^p(1+\eps)\theta_p}
\]
which plugged into the previous estimate gives
\begin{equation}\label{jjj}
\frac{L^p}
{\lan(\Omega\setminus\Sigma_L)}
\leq
\frac{\sup_D \rho}{\inf_D \sigma}\,\left(
\frac{sL}{k_i L_e}\right)^p
(1+\eps)\theta_p.
\end{equation}
Now $D\subset \Omega_i$ and, by \eqref{defki}, $\mathop{\rm diam}(D)=O(1/L)$. Hence,
by positivity and uniform continuity of $\rho$ and $\sigma$ over $\Omega$,
\[
\frac{\sup_D \rho}{\inf_D \sigma}
\leq
(1+\delta_L)\sup_{\Omega_i} \rho/\sigma,
\quad\text{and}\quad
\left(
\frac{sL}{k_i L_e}\right)^p
\leq
\frac {1+\delta_L}{\alpha_i^p}
\]
where $\delta_L$ is independent of $i$ and tends to zero as $L\to\infty$.
Therefore, \eqref{gammalimsup2} follows from \eqref{lss} and \eqref{jjj}, taking the limsup there,
and observing that
\[
\esssup_{x\in\Om}\frac{\rho(x)}{\sigma(x)f(x)^p}
=
\max_j
\left(
\frac 1 {\alpha_j^p} \sup_{\Omega_j} \rho/\sigma\right),
\]
as $f(x)$ is piecewise constant according to \eqref{mufit}.
\end{proof}

Finally, we prove the density in energy of those measures $\mu\in\prob$ that
are fitted to $\chb$ for some $s>0$.
Then, by a general result of $\Gamma$-convergence theory \cite{dalmaso},
the $\Gamma$-limsup inequality \eqref{gammalimsup2} will be established for every probability measure $\mu\in\prob$.

\begin{proposition}\label{prop:denen2}
For every $\mu\in\prob$ there exists a sequence $\{\mu_n\}\subset \prob$ such that
every $\mu_n$ is fitted to $\chb$ for some $s>0$,
$\mu_n\weak\mu$ and
\begin{equation}
\label{claim1}
\limsup_{n\to \infty} \Fp(\mu_n)\leq \Fp(\mu).
\end{equation}
\end{proposition}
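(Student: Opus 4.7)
The plan is to split the argument according to whether $\Fp(\mu)$ is finite.

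If $\Fp(\mu)=+\infty$, the right-hand side of \eqref{claim1} is infinite and the only thing to produce is a sequence of fitted probability measures converging weakly$^*$ to $\mu$. Such a sequence can be obtained for any $s_n\downarrow 0$ by letting $\ain$ be proportional to the mass of $\mu$ on $\Ain$ (plus a small uniform perturbation to keep it strictly positive), and then renormalizing; weak$^*$ convergence is verified by testing against continuous functions on $\overline{\Om}$.

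Now suppose $\Fp(\mu)<+\infty$. Then $\mu$ must be absolutely continuous with respect to Lebesgue measure (otherwise $f=d\mu/dx$ vanishes on a set of positive measure, forcing the essential supremum in \eqref{gammalimittwo} to be infinite). Writing $M:=\lapz\Fp(\mu)$, the inequality $\rho\le M\sigma f^p$ holds a.e., so $f\ge (\rho/(M\sigma))^{1/p}$ a.e.; since $\rho/\sigma$ is continuous and strictly positive on the compact set $\overline{\Om}$, this forces $f$ to be bounded away from zero. For a fixed sequence $s_n\downarrow 0$ I would define
\[
\ain:=\frac{1}{\meas(\Ain)}\int_{\Ain}f(y)\,dy,\qquad \mu_n:=\sum_i \ain\,\chi_{\Ain}\,dx.
\]
Each $\ain$ is then strictly positive, $\mu_n$ is a probability measure fitted to $\chb$ with $s=s_n$, and the Lebesgue differentiation theorem (or martingale convergence) yields $f_n\to f$ in $L^1(\Om)$, hence $\mu_n\weak\mu$.

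The last ingredient is the energy comparison. Plugging $f\ge(\rho/(M\sigma))^{1/p}$ into the average defining $\ain$ gives
\[
\ain\ge M^{-1/p}\inf_{\Ain}(\rho/\sigma)^{1/p},
\]
whence, for every $x\in\Ain$,
\[
\frac{\rho(x)}{\sigma(x)(\ain)^p}\le M\cdot\frac{\sup_{\Ain}(\rho/\sigma)}{\inf_{\Ain}(\rho/\sigma)}.
\]
I expect the only real obstacle to be controlling this oscillation ratio uniformly in $i$ as $s_n\to 0$; but this follows at once from the strict positivity and uniform continuity of $\rho/\sigma$ on the compact set $\overline{\Om}$, which make the ratio tend to $1$ uniformly over the squares $\Ain$. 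Taking essential suprema over $x\in\Om$ and dividing by $\lapz$ delivers $\limsup_n\Fp(\mu_n)\le\Fp(\mu)$, which is precisely \eqref{claim1}.
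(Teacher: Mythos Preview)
Your proof is correct and follows essentially the same approach as the paper: define $\mu_n$ by averaging $\mu$ over a fine grid of squares and control the energy via the uniform continuity and positivity of $\rho/\sigma$ on $\overline{\Om}$. The only cosmetic difference is that you split into the cases $\Fp(\mu)=\infty$ and $\Fp(\mu)<\infty$ (using in the latter the pointwise lower bound $f\ge(\rho/(M\sigma))^{1/p}$), whereas the paper treats all $\mu$ at once by passing to reciprocals and proving $\liminf_n\essinf_\Om g f_n\ge\essinf_\Om g f$ with $g=(\sigma/\rho)^{1/p}$; your case split has the small advantage of making the positivity requirement $\ain>0$ in Definition~\ref{defstep2} completely transparent.
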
\newcommand\hn{^{(n)}}
\begin{proof}
Consider an arbitrary measure $\mu\in\prob$.
Keeping the notation of Definition~\ref{defstep2}, we construct $\mu_n\in\prob$, fitted to ${\mathcal Q}_{1/n}$, as follows.
By analogy with \eqref{mufit}, we set
\begin{equation*}
 d\mu_n=f_n(x) dx, \qquad f_n(x)= \sum_{i} \alpha_{i}\hn \chi_{\Omega_{i}\hn}(x), \quad \Omega_{i}\hn=\Om\cap Q_i\hn,\quad
 {\mathcal Q}_{1/n}=\{Q_i\hn\}
\end{equation*}
where the numbers $\alpha_i\hn$ are chosen as to satisfy the conditions
\begin{equation}
\label{conda}
\frac{\mu\left(\Ain\right)}{\meas\left(\Ain\right) }
\leq
\alpha_i\hn
\leq
\frac{\mu\left(\,\overline{\Ain}\,\right)}{\meas\left(\Ain\right) },\quad
\sum_i \alpha_i\hn\meas\left(\Omega_i\hn\right)=1.
\end{equation}
Note that, for fixed $n$, the $\{\Omega_i\hn\}$ are pairwise disjoint while their closures $\left\{\overline{\Omega_i\hn}\right\}$
cover $\overline{\Omega}$. Then the double inequality above means that $\mu_n$ is a sort of
sampling of $\mu$, and it is easy to see that $\mu_n\mathop{\rightharpoonup}^*\mu$ as $n\to\infty$.

Let $f\in L^1(\Omega)$ be the density of $\mu$ with respect to the Lebesgue measure. Recalling \eqref{gammalimittwo},
passing to reciprocals we see that \eqref{claim1} reduces to
\begin{equation}\label{dimendens11}
\liminf_{n\to \infty} \bigl( \essinf_{x\in\Om} g(x)f_n(x)\bigr)\geq \essinf_{x\in\Om}g(x)f(x),\quad
\text{where $g(x):=\frac{\sigma(x)^{1/p}}{\rho(x)^{1/p}}$.}
\end{equation}
Define the quantity $\tau_n=\min_i \bigl(\inf_{\Omega_i\hn} g/\sup_{\Omega_i\hn} g\bigr)$ and observe that,
as $\mathop{\rm diam}(\Omega_i\hn)=O(1/n)$, by uniform continuity and positivity of $\sigma,\rho$ over $\overline{\Omega}$,
$\tau_n\to 1$ as $n\to\infty$.
To estimate the first $\essinf$ in \eqref{dimendens11}, as the $\partial \Omega_i\hn$ are Lebesgue-negligible, we
can restrict ourselves to consider $x\in \Omega_i\hn$ for some $i$.
Then, using \eqref{conda},
\[
g(x)f_n(x)=
g(x)\alpha_i\hn
\geq
\frac{g(x)\mu\left(\Ain\right)}{\meas\left(\Ain\right) }
\geq
\frac{g(x)\int_{\Omega_i\hn}f(y)\,dy
}{\meas\left(\Ain\right)}
\geq
\frac{\tau_n
\int_{\Omega_i\hn}g(y)f(y)\,dy}
{\meas\left(\Ain\right)}
\]
and hence, from the arbitrariness of $i$,
\[
\essinf_{x\in\Om} g(x)f_n(x)\geq \tau_n \essinf_{x\in\Om}g(x)f(x).
\]
Taking the liminf and using $\tau_n\to 1$, one obtains \eqref{dimendens11} as claimed.
\end{proof}

\subsection{Computation of $ \theta_p$ and optimal sequences}\label{sec:theta}
In this section we prove that the constant $\theta_p$, defined by \eqref{eq:theta}, is the
inverse of the first Dirichlet eigenvalue of the $p$-laplacian on the unit interval.
To this purpose, we define the following class of admissible sets.

\begin{definition}\label{def:comb}
Let $\overline{Y}=[0,1]^2$ be the closed unit square and let $n\geq 1$ be an integer. We define the
set  $C_n\subset\overline{Y}$
(called \emph{comb configuration})
as the union of $n+1$ equispaced vertical segments of length one, a distance of $1/n$ apart, together with the lower base of $\overline{Y}$
(the role of the latter is to make $C_n$ connected, see Figure~\ref{fig:combgrid}).
\end{definition}

\begin{theorem}\label{theta}
Recalling \eqref{defLambda} and \eqref{eq:theta}, there holds
\[
\theta_p= \frac{1}{{\lapz}}.
\]
Moreover, the constant $\theta_p$ is achieved, in \eqref{eq:theta}, when $\Sigma_n$ is the comb-structure
$C_n$ of Definition~\ref{def:comb}.
\end{theorem}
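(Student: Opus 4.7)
The plan is to establish the matching bounds $\theta_p \ge 1/\lapz$ and $\theta_p \le 1/\lapz$, and simultaneously to verify that the comb sequence $\{C_n\}$ realizes the infimum in \eqref{eq:theta}. The inequality $\theta_p \ge 1/\lapz$ follows from Theorem~\ref{teotheta} applied directly to $\Omega = Y$ (so $\meas(Y)=1$, $\kappa=1$, $\haus(\partial Y)=4$). Given any sequence $\Sigma_n \in \mathcal{A}_{L_n}(Y)$ with $L_n \to \infty$, set $h_n := \haus(\Sigma_n \cup \partial Y) \le L_n + 4$. Analyzing the equation defining $\overline t$ in \eqref{deft} for large $h_n$ gives $2\overline t = h_n^{-1}(1+o(1))$ and $(\kappa+1)\pi\overline t/h_n = O(h_n^{-2})$; substitution into \eqref{estb} yields $\lap(Y \setminus \Sigma_n) \le \lapz (L_n+4)^p(1+o(1))$, hence $L_n^p/\lap(Y \setminus \Sigma_n) \ge (1+o(1))/\lapz$, and passing to the infimum in \eqref{eq:theta} gives the claim.

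For the matching upper bound, compute $\haus(C_n) = n+2$ directly from Definition~\ref{def:comb} ($n+1$ vertical unit segments plus the horizontal unit base). The open set $Y \setminus C_n$ splits into $n$ congruent copies of $R_n := (0, 1/n) \times (0, 1)$, each with full Dirichlet boundary (the top edges come from $\partial Y$, which acts as a Dirichlet boundary since $R_n$ is open in $\mathbb R^2$). By the splitting property of $\lap$ over connected components, $\lap(Y \setminus C_n) = \lap(R_n)$, so matters reduce to showing $\lap(R_n) = n^p \lapz(1+o(1))$. The lower bound $\lap(R_n) \ge n^p \lapz$ is immediate from the one-dimensional Poincar\'e inequality in the narrow direction combined with $|\partial_x u|^p \le |\nabla u|^p$. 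The matching upper bound uses the tensor-product test function $\tilde u(x,y) = u_1(nx)\,w(y)$, with $u_1$ the 1D eigenfunction from \eqref{eqei} (and, for $p=1$, its approximator $\min(nz,1,n(1-z))$, as in the proof of Theorem~\ref{teotheta}) and $w$ a fixed smooth cutoff vanishing at $y = 0, 1$.

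The principal obstacle is this sharp upper bound on $\lap(R_n)$. For $p \ne 2$ the $p$-Laplacian does not separate variables, so the naive comparison $(a^2+b^2)^{p/2} \le C_p(|a|^p+|b|^p)$ loses a factor $C_p > 1$; moreover, a direct application of Theorem~\ref{teotheta} to $R_n$ itself yields only $\lap(R_n) \le (2n)^p \lapz(1+o(1))$, off by $2^p$, because the distance to $\partial R_n$ attains only half the width of the rectangle. The cure is to exploit the anisotropy of $\tilde u$: since $|\partial_x \tilde u| = O(n)$ while $|\partial_y \tilde u| = O(1)$, the dimensionless ratio $(\partial_y \tilde u/\partial_x \tilde u)^2$ is $O(n^{-2})$ everywhere except in an $O(n^{-2})$-thin strip around the critical point $x = 1/(2n)$ of $u_1'$. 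A Taylor expansion of $(1+t)^{p/2}$ in this small parameter on the bulk yields the leading contribution $n^{p-1}\int_0^1 |u_1'|^p\,dz \cdot \int_0^1 |w|^p\,dy$ to $\int_{R_n}|\nabla \tilde u|^p$, with a correction of order $n^{p-2}$, while the thin strip separately contributes only $O(n^{-2})$. Dividing by $\int_{R_n}|\tilde u|^p = n^{-1}\int_0^1 |u_1|^p\, dz \cdot \int_0^1 |w|^p\,dy$ gives $\lap(R_n) \le n^p \lapz(1+o(1))$. Combined with $\haus(C_n)^p = (n+2)^p = n^p(1+o(1))$, one concludes $\haus(C_n)^p/\lap(Y \setminus C_n) \to 1/\lapz$, proving both $\theta_p \le 1/\lapz$ and the optimality of $\{C_n\}$.
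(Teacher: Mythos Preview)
Your approach matches the paper's: the bound $\theta_p\ge 1/\Lambda_p$ comes from Theorem~\ref{teotheta} applied to $Y$, and the bound $\theta_p\le 1/\Lambda_p$ from the comb sequence $C_n$ together with $\lambda_p(R_n)\ge n^p\Lambda_p$ (your one-dimensional Poincar\'e argument, which the paper phrases as relaxing to Dirichlet on the long sides only).

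However, you have misidentified the ``principal obstacle.'' The sharp upper bound $\lambda_p(R_n)\le n^p\Lambda_p(1+o(1))$ that you labor over is \emph{not needed}. Once $\lambda_p(R_n)\ge n^p\Lambda_p$ is known, one has directly
\[
\frac{\haus(C_n)^p}{\lambda_p(Y\setminus C_n)}=\frac{(n+2)^p}{\lambda_p(R_n)}\le\frac{(n+2)^p}{n^p\Lambda_p}\longrightarrow\frac{1}{\Lambda_p},
\]
hence $\theta_p\le 1/\Lambda_p$. And since every admissible sequence automatically satisfies $\liminf\ge\theta_p$ by the very definition \eqref{eq:theta}, the comb sequence achieves the infimum without any further work. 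The entire tensor-product/thin-strip analysis can be deleted; this is exactly what the paper does.

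Incidentally, the sketch you give for that unnecessary upper bound is not quite right as written. From the ODE \eqref{eqei} one has $|u_1'(z)|\sim c\,|z-1/2|^{1/(p-1)}$ near the critical point, so the strip where $|u_1'(nx)|\lesssim 1/n$ has $x$-width $O(n^{-p})$, not $O(n^{-2})$; and at the boundary of that strip the ratio $t=(\partial_y\tilde u/\partial_x\tilde u)^2$ is still $O(1)$, so the Taylor expansion of $(1+t)^{p/2}$ is not yet valid there. A correct argument would require a more careful partition (e.g.\ first fixing a threshold $|u_1'|\ge\delta$ and only afterwards sending $\delta\to 0$), but again, none of this is required for the theorem.
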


\begin{proof}
Consider any sequence of sets  $\Sigma_n\subset\overline{Y}$ such that $L_n:=\haus(\Sigma_n)\to\infty$, as described
after \eqref{eq:theta}. Applying Theorem~\ref{teotheta} with $\Omega:=Y$,  $\Sigma:=\Sigma_n$
(and $\kappa=1$ as $\partial Y$ is connected), we find the
bound \eqref{estb}, namely
\begin{equation}\label{aaa}
\lap(Y\setminus\Sigma_n)\leq\frac{{\lapz}}{2^p\overline{t}_n^p}\left(1+\frac{2\pi\overline{t}_n}{\haus(\Sigma_n\cup\partial Y)}\right),
\end{equation}
where
$\overline{t}_n$ is defined as $\overline{t}$ in \eqref{deft}. Note that,
since $\haus(\Sigma_n)=L_n\to\infty$,
\begin{equation}\label{bbb}
\overline{t}_n
=
\dfrac{1}{\left(\haus(\Sigma_n\cup\partial Y)+\sqrt{\haus(\Sigma_n\cup\partial Y)^2+2\pi}\right)}
\sim \frac{1}{2L_n} \quad \text{as $n\to \infty$}.
\end{equation}
Plugging  \eqref{aaa} into \eqref{eq:theta} and using \eqref{bbb},
from the arbitrariness of $\Sigma_n$ one obtains that $\theta_p\geq 1/{\lapz}$.

To prove the opposite inequality, let $C_n$ be the comb structure of Definition~\ref{def:comb}.
Note that $\haus(C_n)=n+2$, hence $C_n\in{\mathcal A}_{n+2}(Y)$. Moreover, the set $Y\setminus C_n$ is the union of $n$
rectangles of size $1/n\times 1$, hence
\begin{equation}\label{eigenrect}
\lambda_p(Y\setminus C_n)=\lambda_p(R_n),\quad R_n=(0,1/n)\times (0,1).
\end{equation}
Even though $\lambda_p(R_n)$ is not known explicitly, a lower bound is obtained by relaxing the boundary condition,
from Dirichlet on the whole $\partial R_n$ to Dirichlet on the two long sides of $R_n$ (and Neumann on the short, horizontal sides).
With these boundary conditions,
it is well known that the first eigenvalue coincides with the corresponding eigenvalue in one variable, on the interval $(0,1/n)$.
Thus, if $W$ is the subspace of those functions $w\in W^{1,p}(R_n)$ with null trace at $x=0$ and at $x=1/n$,
recalling \eqref{defLambda}
\[
\lambda_p(R_n)\geq
\inf_{w\in W\atop w\not\equiv 0}
\frac{\int_0^{1}\int_0^{1/n} |\nabla w(x,y)|^p\,dxdy}
{\int_0^{1}\int_0^{1/n} |w(x,y)|^p\,dxdy}
=
\inf_{u\in W^{1,p}_0(0,1/n) \atop u\not\equiv 0} \frac
{\int_0^{1/n} |u'(x)|^p\,dx}{\int_0^{1/n} |u(x)|^p\,dx}
=
n^p\lapz,
\]
that is $\lambda_p(Y\setminus C_n)\geq n^p\Lambda_p$. Now, if we choose $\Sigma_n=C_n$
(and $L_n=n+2$) in \eqref{eq:theta},
we find the optimal upper bound
\[
 \theta_p\leq\liminf_{n\to \infty}
 \frac{(n+2)^p}{\lap(Y\setminus C_n)}\leq \liminf_{n\to \infty}
 \frac{(n+2)^p}{n^p\lapz}
=\frac{1}{{\lapz}}.\qedhere
\]
\end{proof}

\begin{remark}
If $p=2$ or $p=1$, $\lap(R_n)$ in \eqref{eigenrect} is known explicitly.
More precisely, when $p=2$ it is well known that $\lambda_2(R_n)=\pi^2(n^2+1)$.
Moreover, when
 $p=1$ the first Dirichlet eigenvalue $\lambda_1(R_n)$ is just the Cheeger constant
of $R_n$, namely
\[
h(R_n)=\frac{4-\pi}{1+1/n-\sqrt{(1-1/n)^2+\pi/n}}=\Big(1+1/n+\sqrt{(1-1/n)^2+\pi/n}\Big)n
\]
(see \cite{kawfri,kawlac} for more details).
\end{remark}

\begin{figure}
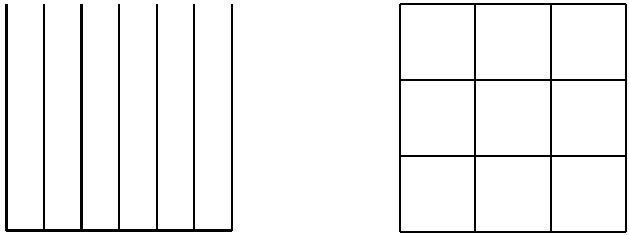

\begin{center}
\setlength\tabcolsep{0pt}\renewcommand\arraystretch{0}
\newcommand\tassello{\mbox{\rule[0.5cm]{0.5cm}{0pt}}}
\newcommand\griglia{\mbox{\rule[1cm]{1cm}{0pt}}}
\begin{tabular}{|c|c|c|c|c|c|}
\tassello & \tassello & \tassello & \tassello & \tassello & \tassello \\
\tassello & \tassello & \tassello & \tassello & \tassello & \tassello \\
\tassello & \tassello & \tassello & \tassello & \tassello & \tassello \\
\tassello & \tassello & \tassello & \tassello & \tassello & \tassello \\
\tassello & \tassello & \tassello & \tassello & \tassello & \tassello \\
\tassello & \tassello & \tassello & \tassello & \tassello & \tassello \\ \hline
\end{tabular}
\hspace{2cm}
\begin{tabular}{|c|c|c|c|c|c|}
\hline
\griglia & \griglia & \griglia \\ \hline
\griglia & \griglia & \griglia \\ \hline
\griglia & \griglia & \griglia \\ \hline
\end{tabular}
\end{center}
\caption{A comb-shaped configuration as opposite  to a grid structure.}\label{fig:combgrid}
\end{figure}

\begin{remark}\label{rem:conjecture}
We may call ``asymptotically optimal'' (for the unit square $Y$)
those sequences of admissible configurations $\Sigma_n$
(such as the comb configurations $C_n$) that achieve the infimum
$\theta_p$ in \eqref{eq:theta}. Other examples of asymptotically optimal
configurations are provided by \emph{oblique} comb structures,
that is, the intersection of $Y$ with a (thicker and thicker)
family of equispaced parallel lines (plus $\partial Y$ to make the structure connected):
the reason for asymptotic optimality is that (much like the vertical combs $C_n$)
these sets disconnect $Y$ into the union of (approximate) thin rectangles, and the
first eigenvalue of a thin rectangle is mainly governed by its short side (a detailed
proof would follow the same lines as the proof concerning $C_n$).

We point out, however, that for large length $L$
a comb configurations is \emph{strictly}
more performant (at least when $p=2$)
than a grid structure of about the same length (see Figure \ref{fig:combgrid}).
Indeed, while for $C_n$ the ratio $\haus(C_n)^2/\lambda_2(Y\setminus C_n)$ is
about $1/\Lambda_2=1/\pi^2$,
one can easily check that, replacing $C_n$ with a grid structure of about the same
length, the new ratio would approach $2/\pi^2$, hence a comb structure is twice more
performant than a grid structure.

Remarkably, the same comb structures are asymptotically optimal also for
average-distance problems (as proved in~\cite{mostil}) and for
the compliance optimization
(this was conjectured by Buttazzo and Santambrogio in~\cite{butsan} and later
proved in~\cite{til}).
\end{remark}

\section{The asymptotics as $p\to \infty$ and maximum distance problems}\label{sec:max}

In this section we investigate problem \eqref{prob2} as $p$ tends to $\infty$
(with fixed $L$),
showing that it converges to the problem
\begin{equation}\label{maxi}
\max \Big\{\frac{1}{\displaystyle \max_{x\in\Om}\mathop{d}(x, \Si \cup \partial \Om)}  : \; \Sigma\in \Alo\Big\}.
\end{equation}
This is not surprising,
since for every bounded domain $D$
\begin{equation}\label{lemmai}
\lim_{p\to \infty} \lap(D)^{1/p} = \frac{1}{\displaystyle \max_{x\in D}\mathop{d}(x,\partial D)}
\end{equation}
(and the right-hand side can be taken as the definition of $\lambda_\infty(D)$, the
principal frequency of the "$\infty$-laplacian", see \cite{julima}).
In other words, problem \eqref{prob2} in the limiting case $p=\infty$ reduces
to the so-called \emph{maximum distance problem}
\begin{equation}\label{mini}
\min \{ \max_{x\in\Om} \mathop{d}(x, \Si \cup \partial \Om): \; \Si\in\Alo\},
\end{equation}
that is, to search for those configurations $\Si\in\Alo$ that minimize
the radius of the largest ball that fits in $\Oms$ (see~\cite{paoste}).

In view of letting $p\to\infty$ in \eqref{prob2},
since now $L$ is fixed, scaling by $L^p$ as in \eqref{fl} would
be pointless: the proper normalization, suggested by
\eqref{lemmai}, is raising the eigenvalue to the power $1/p$. Moreover,
 it is no longer necessary to
work in the space of probability measures, as the set $\Alo$ provides the
natural common domain for the relevant functionals.

The precise $\Gamma$-convergence result is then the following.
\begin{theorem}\label{teoFlinew}
Fix $L>0$. As $p\to \infty$, the functionals $\Flpnew:\Alo\mapsto(0,\infty)$ defined for $p>1$ by
\[
 \Flpnew(\Si):=
 \frac{1}{\lan(\Oms)^{1/p}}
\]
$\Gamma$-converge, with respect to the Hausdorff distance on $\Alo$, to the $\Gamma$-limit
\[
 \Flinew(\Si):=
 \max_{x\in\Om}\mathop{d}(x, \Si \cup \partial \Om).
\]
\end{theorem}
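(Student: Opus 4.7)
The plan is to verify, in the Hausdorff topology on $\Alo$, both the $\Gamma$-liminf inequality along arbitrary Hausdorff-convergent sequences $\Sigma_p\to\Sigma$ and the existence of a recovery sequence for each target $\Sigma\in\Alo$. A preliminary step decouples the result from the nonconstant coefficients $\rho,\sigma$: the bilateral bound \eqref{analog} applied to $D=\Om\setminus\Sigma_p$, after taking $p$-th roots, reads
\[
\left(\frac{\inf_\Om\sigma}{\sup_\Om\rho}\right)^{\!1/p}\!\lap(\Om\setminus\Sigma_p)^{1/p}\le\lan(\Om\setminus\Sigma_p)^{1/p}\le\left(\frac{\sup_\Om\sigma}{\inf_\Om\rho}\right)^{\!1/p}\!\lap(\Om\setminus\Sigma_p)^{1/p},
\]
and since $\rho,\sigma$ are positive and continuous on $\overline{\Om}$, both prefactors tend to $1$ as $p\to\infty$. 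Hence the theorem reduces to showing $\lap(\Om\setminus\Sigma_p)^{1/p}\to 1/\Flinew(\Sigma)$ for every $\Sigma_p\to\Sigma$ in $\Alo$, which subsumes both $\Gamma$-inequalities (the constant sequence then serves as recovery).

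For the $\Gamma$-liminf step, set $R_p=\max_{x\in\overline{\Om}}\mathop{d}(x,\Sigma_p\cup\partial\Om)$ and $R_\infty=\Flinew(\Sigma)$. Hausdorff convergence of the closed sets $\Sigma_p\cup\partial\Om$ to $\Sigma\cup\partial\Om$ entails uniform convergence of the associated distance functions on $\overline{\Om}$, so $R_p\to R_\infty>0$. Choose $x_p\in\Om$ attaining $R_p$: the open ball $B(x_p,R_p)$ lies in $\Om\setminus\Sigma_p$, and monotonicity together with the scaling $\lap(B_r)=\lap(B_1)/r^p$ yields
\[
\lap(\Om\setminus\Sigma_p)\le\lap\bigl(B(x_p,R_p)\bigr)=\frac{\lap(B_1)}{R_p^{\,p}}.
\]
Applying \eqref{lemmai} to the unit ball, $\lap(B_1)^{1/p}\to 1$; taking $p$-th roots then gives $\limsup\lap(\Om\setminus\Sigma_p)^{1/p}\le 1/R_\infty$, i.e.\ $\liminf\Flpnew(\Sigma_p)\ge R_\infty=\Flinew(\Sigma)$.

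For the $\Gamma$-limsup step, the recovery sequence is the constant one $\Sigma_p\equiv\Sigma$. Set $D=\Om\setminus\Sigma$: since $\Sigma$ is a continuum of finite $\haus$ (hence with empty interior in $\R^2$) and $\partial\Om$ is Lipschitz, one checks that $\mathop{d}(x,\partial D)=\mathop{d}(x,\Sigma\cup\partial\Om)$ for every $x\in D$, whence $\max_{x\in D}\mathop{d}(x,\partial D)=R_\infty$. Applying \eqref{lemmai} to $D$ then gives $\lap(D)^{1/p}\to 1/R_\infty$, and the preliminary reduction yields $\Flpnew(\Sigma)\to\Flinew(\Sigma)$. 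The main technical delicacy is the passage to the limit when $D$ has many connected components $\{D_i\}$: the splitting $\lap(D)=\inf_i\lap(D_i)$ combined with the fact that for every $\varepsilon>0$ only finitely many $D_i$ have inradius exceeding $\varepsilon$ (by packing disjoint inscribed balls into the bounded set $\Om$) reduces the asymptotics to a finite subfamily, on each element of which \eqref{lemmai} applies directly. Together with the uniform handling of the anisotropic prefactors above, this bookkeeping is the principal technical obstacle.
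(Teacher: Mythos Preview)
Your proof is correct and follows essentially the same approach as the paper: reduce to $\rho=\sigma\equiv 1$ via \eqref{analog}, obtain the $\Gamma$-liminf by comparison with an inscribed ball (the paper fixes a radius $r<R_\infty$ rather than tracking $R_p$, but the idea is identical), and obtain the $\Gamma$-limsup from the constant recovery sequence together with the pointwise convergence \eqref{lemmai}. Your added discussion of the connected components of $\Omega\setminus\Sigma$ is a caution the paper does not spell out; note, however, that to make the reduction to a finite subfamily rigorous you still need a lower bound on $\lap(D_i)^{1/p}$ for the small components that is uniform in $i$, not just the packing argument.
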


\begin{proof} Since the functions $\rho,\sigma$ are uniformly positive and bounded,
by \eqref{analog}
we may assume
that $\rho,\sigma\equiv 1$, and work with $\lap$ in place of $\lan$. We start with the $\Gamma$-liminf inequality,
proving that for every $\Si\in\Alo$ and every
sequence $\{\Si_p\}\subset\Alo$ such that $\Si_p\to\Si$ in the Hausdorff distance, it holds
\begin{equation}\label{liminfinftyp22}
  \liminf_{p\to \infty} \Flpnew(\Si_p)
  \geq
\max_{x\in\Om}\mathop{d}(x, \Si \cup \partial \Om).
\end{equation}
For a fixed $\Sigma\in\Alo$ and  a sequence $\{\Sigma_p\}$ converging in the Hausdorff distance to $\Sigma$, choose a number $r>0$
such that $r<\max_{x\in\Om}\mathop{d}(x, \Si \cup \partial \Om)$. From the Hausdorff convergence of $\{\Si_p\}$ to $\Si$,
we see that $r<\mathop{d}(x,\Si_p\cup\partial \Om)$, and hence there exists a ball $B_r$ of radius $r$ such that $B_r\subset \Oms_p$,
provided $p$ is large enough. Therefore, by monotonicity of $\lap$ and \eqref{lemmai}
\[
 \liminf_{p\to \infty} \Flpnew(\Si_p)
 \geq
 \liminf_{p\to \infty} \frac 1{\lap(B_r)}
=
\max_{x\in B_r} \mathop{d}(x,\partial B_r)=r,
\]
and letting $r\to \max_{x\in\Om}\mathop{d}(x,\Si\cup\partial \Om)$ we obtain
\eqref{liminfinftyp22}. Finally, the $\Gamma$-limsup inequality follows
immediately
from the pointwise convergence of $\Flpnew$ to $\Flinew$, i.e. from \eqref{lemmai}.
Indeed, given $\Si\in\Alo$ one can define the constant sequence $\Sigma_p:=\Sigma$, which gives
\[
\limsup_{p\to \infty} \Flpnew(\Si_p)=
\lim_{p\to \infty} \Flpnew(\Si)=
 \Flinew(\Si). \qedhere
\]
\end{proof}

\begin{remark}
As a consequence of this $\Gamma$-convergence result and the compactness of the
space $\Alo$ with respect to the Hausdorff convergence, we get the stability of
the maximizers $\Si_p$, as $p$ converges to $\infty$. If $\Si_p$ is a maximizer
of problem \eqref{prob2} and $p \to \infty$, then, up to subsequences, the
sets $\Si_p$ converge in the Hausdorff distance to a
minimizer $\Si_\infty$ of problem \eqref{mini}. Moreover
\[
 \lim_{p\to \infty} \lan(\Oms_p)^{1/p}=\frac{1}{\displaystyle \max_{x\in\Om}\mathop{d}(x, \Si_\infty \cup \partial \Om)}.
\]
\end{remark}
The next $\Gamma$-convergence result is  the analogue of Theorem \ref{teogam2},
for the case $p=\infty$.
\begin{theorem}\label{tgammai}
As $L\to\infty$ the functionals  $\Fli:\prob\to [0,\infty]$ defined as
\begin{equation}\label{fli}
\Fli(\mu)=
\begin{cases}
\displaystyle L\max_{x\in\Om} \mathop{d}(x, \Si \cup \partial \Om)
&\text{if $\mu=\mu_\Si$ for some $\Si\in\Alo$,}\\
\infty  &\text{otherwise}
\end{cases}
\end{equation}
$\Gamma$-converge, with respect to the weak*  topology of $\prob$, to the functional $\Fi:\prob\mapsto
[0,\infty]$ defined by
\begin{equation}\label{gammalimiti}
\Fi(\mu)=\frac{1}{2}\esssup_{x\in\Omega} \frac{1}{f(x)},
\end{equation}
where $f\in L^1(\Om)$ is the density  of $\mu$ with respect to the Lebesgue measure.
\end{theorem}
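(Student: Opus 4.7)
The plan is to mirror, essentially line by line, the proof of Theorem~\ref{teogam2}, with the numerical constant $1/\Lambda_\infty=1/2$ taking the role of $\theta_p=1/\Lambda_p$ and the Rayleigh-quotient estimates replaced by purely geometric bounds coming from Lemma~\ref{lemmaarea}. The three steps are: (i) the $\Gamma$-liminf inequality, via a localized application of Lemma~\ref{lemmaarea} on subsquares $Q\subset\subset\Omega$; (ii) the $\Gamma$-limsup inequality for measures fitted to $\chb$, using rescaled comb-type microtiles exactly as in Section~\ref{gammasup}; (iii) the extension of (ii) to every $\mu\in\prob$ by density in energy, mimicking Proposition~\ref{prop:denen2}.

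For (i), I would consider $\mu_L\weak\mu$ with $\liminf\Fli(\mu_L)<\infty$: then $\mu_L=\mu_{\Sigma_L}$ with $\haus(\Sigma_L)=L$, and $M_L:=\max_\Omega d(\cdot,\Sigma_L\cup\partial\Omega)$ satisfies $LM_L$ bounded, so $M_L\to 0$. Fix an open square $Q\subset\subset\Omega$ with $d_0:=d(\overline Q,\partial\Omega)>0$. For $L$ so large that $M_L<d_0$, every $x\in\overline Q$ satisfies $d(x,\Sigma_L)\leq M_L$ (because $d(x,\partial\Omega)>M_L$), and a short case analysis according to whether $d(x,\partial Q)$ exceeds $M_L$ or not yields $d(x,(\Sigma_L\cap\overline Q)\cup\partial Q)\leq M_L$ throughout $\overline Q$. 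Applying Lemma~\ref{lemmaarea} to $Q$ with $\Sigma:=\Sigma_L\cap\overline Q$ (so $\kappa=1$) and invoking \eqref{lowerboundt} then gives $\overline t_L\leq M_L$. Since $\haus(\Sigma_L\cap\overline Q)=L\mu_L(\overline Q)$ while $\haus(\partial Q)$ is fixed, dividing \eqref{deft} by $L$ and passing to the limit (using $\limsup\mu_L(\overline Q)\leq\mu(\overline Q)$ from weak-$*$ convergence) yields
\[
\liminf_{L\to\infty}LM_L\;\geq\;\liminf_{L\to\infty}L\overline t_L\;\geq\;\frac{\meas(Q)}{2\mu(\overline Q)}.
\]
Shrinking $Q$ around a Lebesgue point of the density $f$ of $\mu$ then gives $\liminf LM_L\geq 1/(2f(x))$ for a.e. $x\in\Omega$, hence the $\Gamma$-liminf bound.

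For (ii), fix $\mu$ fitted to $\chb$ with $f=\sum_i\alpha_i\chi_{\Omega_i}$, fix $\ep>0$, and pick a connected tile $\Sigma\subset\overline Y$ containing $\partial Y$ such that the effective length $L_e:=\haus(\Sigma\cap[0,1)^2)$ and the max-distance $M_\Sigma:=\max_Y d(\cdot,\Sigma)$ satisfy $L_eM_\Sigma<(1+\ep)/2$. The comb $C_n$ from Definition~\ref{def:comb}, augmented by the top side of $\overline Y$, realises such a bound for large $n$: a direct count gives $L_e=n+1$ and $M_\Sigma=1/(2n)$, so $L_eM_\Sigma=(n+1)/(2n)\to 1/2$. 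I would then construct the sets $\Sigma_L$ exactly as in the proof of Theorem~\ref{teogam2}, placing $k_i^2$ rescaled copies of this tile inside each $\Omega_i$ with $k_i=\lfloor s\alpha_i(L-\sqrt L)/L_e\rfloor$; the same bookkeeping yields $\Sigma_L\in\Alo$, its connectedness (thanks to $\partial Y\subset\Sigma$), and $\mu_L\weak\mu$. On the framing square of any interior microtile one has $d(\cdot,\Sigma_L)\leq(s/k_i)M_\Sigma$, while the boundary cells are controlled by $\partial\Omega\subset\Sigma_L$; hence $M_L\leq\max_i(sM_\Sigma)/k_i$, and
\[
LM_L\;\leq\;(1+o(1))\max_i\frac{L_eM_\Sigma}{\alpha_i}\;\leq\;\frac{1+\ep}{2}\,\esssup_\Omega\frac{1}{f},
\]
so $\limsup\Fli(\mu_L)\leq(1+\ep)\Fi(\mu)$.

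Step (iii) is a carbon copy of Proposition~\ref{prop:denen2}: the sampling $\mu_n$ fitted to ${\mathcal Q}_{1/n}$ defined via \eqref{conda} satisfies $\alpha_i^{(n)}\geq\essinf_{\Omega_i^{(n)}}f\geq\essinf_\Omega f$, whence $\esssup 1/f_n\leq 1/\essinf f=\esssup 1/f$ and $\Fi(\mu_n)\leq\Fi(\mu)$, while $\mu_n\weak\mu$ still holds as in the paper. Combining with (ii) and letting $\ep\downarrow 0$ via a standard diagonal extraction removes the factor $(1+\ep)$ and concludes the $\Gamma$-convergence. The main obstacle I expect is the liminf step, and more precisely the single bound $\overline t_L\leq M_L$: here one must localize the global maximum $M_L=\max_\Omega d(\cdot,\Sigma_L\cup\partial\Omega)$ inside the subsquare $Q$, and this relies essentially on the qualitative fact $M_L\to 0$ together with the strict containment $Q\subset\subset\Omega$ to guarantee that the point of $\Sigma_L$ realising the distance from any $x\in\overline Q$ actually lies in $\overline Q$, so that substituting $(\Sigma_L\cap\overline Q)\cup\partial Q$ for $\Sigma_L\cup\partial\Omega$ does not distort the distance estimates.
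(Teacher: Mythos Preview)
Your proposal is correct and follows essentially the same route as the paper. The paper's own proof is deliberately brief: it simply asserts that the proof of Theorem~\ref{teogam2} adapts with minor changes, and the only thing it spells out is the computation of the analogue $\theta_\infty$ of $\theta_p$, showing $\theta_\infty=1/2$ via Lemma~\ref{lemmaarea} (for the lower bound) and the comb $C_n$ (for the upper bound). Your three steps are precisely this adaptation written out in detail, with the computation of $\theta_\infty$ absorbed directly into steps~(i) and~(ii) rather than isolated as a separate constant.

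One small technical point worth tightening in step~(i): you apply Lemma~\ref{lemmaarea} to $Q$ with $\Sigma:=\Sigma_L\cap\overline Q$, but this set need not be connected, so strictly speaking the lemma (which requires $\Sigma\in\Alo$) does not apply as stated. The easy fix, exactly parallel to what the paper does around~\eqref{uuu}--\eqref{eq2} in the $p<\infty$ liminf proof, is to adjoin $\partial Q$ to the set: for $L$ large enough that $M_L<d_0$, the connected set $\Sigma_L$ cannot lie entirely inside $\overline Q$, so every component of $\Sigma_L\cap\overline Q$ meets $\partial Q$ and the union $(\Sigma_L\cap\overline Q)\cup\partial Q$ is a genuine continuum. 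This changes nothing in your length estimates (only an extra $\haus(\partial Q)=O(1)$), and your bound $\overline t_L\le M_L$ and the subsequent passage to the limit go through unchanged.
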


\begin{proof} One can adapt, with only minor changes and several
simplifications, the proof of Theorem~\ref{teogam2},
the details are omitted. The only relevant change concerns the analogue of the constant $\theta_p$
and its computation (the analogue of Theorem~\ref{theta}).
For $p=\infty$, we adapt \eqref{eq:theta}
by defining
\begin{equation}\label{thetainf}
 \theta_\infty:=\inf\left\{\liminf_{n\to \infty} L_n \max_{x\in Y}  \mathop{d}
 (x, \Si_n \cup \partial Y)\right\},\quad Y=(0,1)\times(0,1),
\end{equation}
where, as in \eqref{eq:theta}, the infimum is
over all sequences $L_n\to \infty $ and all sequences of sets $\Sigma_n\in{\mathcal A}_{L_n}$ such that
$\haus(\Sigma_n)=L_n$. To complete the proof, we will show that $\theta_\infty=1/2$.

Consider two sequences $L_n$, $\Sigma_n$ as described above. Applying Lemma~\ref{lemmaarea}
with $\Omega=Y$ (hence $\kappa=1$) and $\Sigma=\Sigma_n$, we can use \eqref{lowerboundt} with
$\overline{t}$ defined as in \eqref{deft}. Since clearly $\haus( \Sigma_n\cup \partial Y)
\leq L_n+4$, \eqref{lowerboundt} gives
\[
L_n \max_{x\in Y} \mathop{d}(x, \Si_n \cup \partial Y)\geq
\frac{L_n}{(L_n+4)+\sqrt{(L_n+4)^2+2\pi}}.
\]
Letting $n\to\infty$, from the arbitrariness of $\Sigma_n$ and $L_n$, we
see from \eqref{thetainf} that $\theta_\infty\geq 1/2$.

On the other hand, choosing $\Si_n=C_n$ (the comb-shaped structure of Definition~\ref{def:comb})
and $L_n=\haus(C_n)=n+2$,
we clearly have
\[
L_n \max_{x\in Y} \mathop{d}(x, \Si_n \cup \partial Y)=\frac{L_n}{2n}=\frac{n+2}{2n},
\]
and letting $n\to\infty$ we see from \eqref{thetainf} that $\theta_\infty\leq 1/2$. Thus,
$\theta_\infty=1/2$.
\end{proof}

\begin{remark}
The $\Gamma$-limit functional
$\Fi$ has a unique minimizer, given by normalized Lebesgue measure over $\Omega$.
As a consequence (cf.~Corollary~\ref{cor:main2}),
if $\Si_L$  is a maximizer of problem \eqref{maxi}, as $L \to \infty$
 the probability measures $\mu_{\Si_L}$ converge in the weak* topology
to the uniform measure $dx/\meas(\Om)$ (the minimizer of $\Fi$).
\end{remark}

Finally, for completeness,
we prove that
the functionals defined in \eqref{gammalimittwo}, after renormalization,
$\Gamma$-converge to the functional in \eqref{gammalimiti}.
\begin{theorem}\label{teoFi2}
As $p\to \infty$ the functionals $\Fpnew$ defined by
\[
 \Fpnew(\mu)=\frac{1}{{\lapz}^{1/p}}\esssup_{x\in\Om} \frac{\rho(x)^{1/p}}{\sigma(x)^{1/p}f(x)},\qquad
 \mu\in\prob
\]
$\Gamma$-converge, in the weak* topology on $\prob$, to the $\Gamma$-limit $\Fi$ defined
by \eqref{gammalimiti}.
\end{theorem}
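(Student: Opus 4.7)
My plan is to combine two ingredients: pointwise convergence of $\Fpnew$ to $\Fi$ on $\prob$, and weak*-lower semicontinuity of $\Fi$. The first gives the $\Gamma$-limsup inequality via constant recovery sequences; the second, combined with an elementary pointwise lower bound of $\Fpnew$ in terms of $\Fi$, yields the $\Gamma$-liminf.

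For the pointwise convergence, I would exploit that $\rho,\sigma$ are continuous and strictly positive on the compact set $\overline{\Omega}$, so that $(\rho(x)/\sigma(x))^{1/p}\to 1$ uniformly on $\overline{\Omega}$, and that the explicit formula $\lapz=(p-1)(2\pi/(p\sin(\pi/p)))^p$ gives $\lapz^{1/p}\to 2$. Consequently, for every $\mu\in\prob$ with Radon-Nikodym density $f$,
\[
\Fpnew(\mu)=\frac{1}{\lapz^{1/p}}\esssup_\Omega\frac{(\rho/\sigma)^{1/p}}{f}\longrightarrow\frac{1}{2}\esssup_\Omega\frac{1}{f}=\Fi(\mu),
\]
with both sides simultaneously $+\infty$ precisely when $f$ vanishes on a set of positive Lebesgue measure. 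The $\Gamma$-limsup inequality follows immediately from the constant recovery sequence $\mu_p\equiv\mu$.

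The heart of the argument is to show that $\Fi$ is weak*-lower semicontinuous on $\prob$. My plan is to establish the representation
\[
2\Fi(\mu)=\esssup_\Omega\frac{1}{f}=\sup\Bigl\{\frac{\meas(K)}{\mu(K)}:K\subseteq\overline{\Omega}\text{ closed},\ \meas(K)>0\Bigr\},
\]
with the convention $a/0:=+\infty$ for $a>0$. The bound $\geq\meas(K)/\mu(K)$ is immediate from $\mu(K)\geq\int_K f\,dx\geq\essinf_\Omega f\cdot\meas(K)$. For the reverse, given any $c<\esssup 1/f$, the set $E=\{f<1/c\}$ has positive Lebesgue measure; letting $N$ be a $\meas$-negligible Borel carrier of the singular part $\mu^s$ and invoking inner regularity of $\meas$, one picks a closed $K\subseteq E\setminus N$ with $\meas(K)>0$, whence $\mu(K)=\int_K f\,dx\leq\meas(K)/c$. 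For each fixed closed $K$, $\mu\mapsto\mu(K)$ is weak*-upper semicontinuous (Portmanteau), so $\mu\mapsto\meas(K)/\mu(K)$ is weak*-lower semicontinuous, and $\Fi$ inherits lsc as the supremum of a family of lsc functionals.

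For the $\Gamma$-liminf, given $\mu_p\weak\mu$, pulling the uniform lower bound $(\rho/\sigma)^{1/p}\geq(\inf_\Omega(\rho/\sigma))^{1/p}$ out of the essential supremum yields
\[
\Fpnew(\mu_p)\geq\frac{2(\inf_\Omega(\rho/\sigma))^{1/p}}{\lapz^{1/p}}\,\Fi(\mu_p),
\]
whose prefactor tends to $1$; passing to the liminf and applying the lsc of $\Fi$ concludes. The main obstacle is the representation formula for $\Fi$: the singular part of $\mu$ must be handled carefully (hence the removal of the carrier $N$), and testing against \emph{closed} rather than open sets is essential in order for $\mu\mapsto\mu(K)$ to be upper semicontinuous and for the supremum to inherit lower semicontinuity.
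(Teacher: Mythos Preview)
Your proof is correct and shares the same skeleton as the paper's: both exploit a two-sided estimate $m_p\,\Fi(\mu)\leq\Fpnew(\mu)\leq M_p\,\Fi(\mu)$ with constants $m_p,M_p\to 1$, which reduces the $\Gamma$-convergence to the weak*-lower semicontinuity of $\Fi$ (the $\Gamma$-limsup then follows from the constant recovery sequence, and the $\Gamma$-liminf from the lower bound plus lsc).

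The genuine difference is how lower semicontinuity of $\Fi$ is obtained. The paper does not prove it at all: it simply observes that $\Fi$ has already been exhibited as a $\Gamma$-limit in Theorem~\ref{tgammai}, and $\Gamma$-limits are automatically lower semicontinuous (Dal~Maso, Prop.~6.8), so the constant sequence $\{\Fi\}$ $\Gamma$-converges to $\Fi$. You instead give a direct, self-contained argument via the dual representation $\esssup_\Omega 1/f=\sup\{\meas(K)/\mu(K):K\text{ closed},\ \meas(K)>0\}$ and Portmanteau. The paper's route is a one-line shortcut that recycles earlier work; your route is longer but independent of Theorem~\ref{tgammai} and delivers a useful explicit formula for $\Fi$ as a supremum of weak*-lsc functionals.
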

\begin{proof}
Since for all $\mu\in\prob$ we have $m_p \Fi(\mu)\leq \Fpnew(\mu)\leq M_p \Fi(\mu)$ where
\[
m_p=\frac 2 {\Lambda_p^{1/p}}\min_{x\in \overline\Omega}\frac{\rho(x)^{1/p}}{\sigma(x)^{1/p}},
\quad
M_p=\frac 2 {\Lambda_p^{1/p}}\max_{x\in \overline\Omega}\frac{\rho(x)^{1/p}}{\sigma(x)^{1/p}},
\]
and the constants $m_p,M_p\to 1$ as $p\to\infty$, it suffices to prove that the sequence
of functionals $\{\Fi\}_p$ (independent of $p$)
$\Gamma$-converges to $\Fi$ itself. But this is true, since
$\Fi$ (already obtained as a $\Gamma$-limit in $\prob$ by
Theorem~\ref{tgammai}) is a~fortiori lower semicontinuous (see Prop.~6.8 and Rem.~4.5 in~\cite{dalmaso}).
\end{proof}

An overall picture of these $\Gamma$-convergence results is given in the following
commutative diagram, where the first line is an equivalent formulation
(see~\cite{dalmaso}, Prop.~6.16)
of Theorem~\ref{teogam2}.
\bigskip

\begin{center}
\begin{tabular}{ccc}
$\displaystyle \Sigma \mapsto  \frac{L}{(\lan(\Oms))^{1/p}}$ &
$\xrightarrow[\text{(Thm \ref{teogam2})}]{\displaystyle\quad L\to\infty\quad}$ &
$\displaystyle \mu \mapsto  \frac{1}{{\lapz}^{1/p}}\esssup_{x\in\Om}\frac{\rho(x)^{1/p}}{\sigma(x)^{1/p}f(x)}$ \\[5mm]
$\mathop{}^{\displaystyle p\to\infty}\limits_{\text{(Thm \ref{teoFlinew})}}
 \Bigg\downarrow
$   & &
$\quad\quad\quad\qquad \Bigg\downarrow
\mathop{}^{\displaystyle p\to\infty}\limits_{\text{(Thm \ref{teoFi2})}}$ \\[5mm]
$\displaystyle \Si \mapsto  L \max_{x\in\Om}\mathop{d}(x, \Si \cup \partial \Om)$ &
$\xrightarrow[\quad\text{(Thm \ref{tgammai})}]{\displaystyle\quad L\to\infty\quad}$ &
$\displaystyle \mu \mapsto \frac{1}{2}\esssup_{x\in\Om}\dfrac{1}{f(x)}$
\end{tabular}
\end{center}

\end{document}